\tikzstyle{decision} = [chamfered rectangle, draw, text centered, chamfered rectangle xsep=2em]
\tikzstyle{block}    = [rectangle, draw, text centered]
\tikzstyle{schrift}  = [rectangle, text centered]
\tikzstyle{line}     = [draw, -latex']  
\tikzstyle{symbol}   = [rectangle, draw, text centered, minimum height= 2em, minimum width = 2em]
\tikzstyle{syndrom}  = [rectangle, draw, text centered, minimum height= 2em, minimum width = 4em]
 \newtheorem{thm}{Theorem}[section]
 \newtheorem{cor}[thm]{Corollary}
 \newtheorem{lem}[thm]{Lemma}
 \theoremstyle{definition}
 \newtheorem{defn}[thm]{Definition}
 \theoremstyle{remark}
 \newtheorem{rem}[thm]{Remark}
 \newtheorem*{ex}{Example}
 \numberwithin{equation}{section}
\title{On the generalized Hurwitz equations and
the Baragar-Umeda equations}
\author{Benjamin Fine, Gabriele Kern-Isberner, \\
Anja I. S. Moldenhauer and Gerhard Rosenberger}
\date{}  
\begin{document}

\maketitle
\begin{center}\textit{In memoriam Murray Macbeath}\end{center}

\vspace{0.5cm}
\begin{abstract}
We consider the generalized Hurwitz equation $a_1x_1^2+ \cdots +a_nx_n^2 = dx_1 \cdots x_n-k$
 and the Baragar-Umeda equation $ax^2+by^2+cz^2=dxyz+e$ for solvability in integers.
\end{abstract}

\textbf{Mathematics Subject Classification (2010)}: 11D25, 11D45, 11D72, 20H10.

\textbf{Keywords}: Diophantine equation, generalized Hurwitz equation, Markoff-Rosenberger equation, Baragar-Umeda equation.


\section{Introduction}

A  generalized Hurwitz equation is given by
\begin{align} \label{GH}
a_1x_1^2+\cdots +a_nx_n^2 = dx_1\cdots x_n -k
\end{align}
with $n \geq 3$, $k \in \mathbb N \cup \{0\}$, $a_1, \ldots, a_n, d \in \mathbb N$ such that $a_i | d$ for $i=1,\ldots,n$ and $gcd(a_i,a_j)=1$ for $i \ne j$.\\
Hurwitz \cite{H} itself considered the special case $a_1=\cdots=a_n=1$. In this case we call \eqref{GH} just a Hurwitz equation.\\
For $n=3$ the equation \eqref{GH} is often called the Markoff-Rosenberger equation (MR) and is quite well understood. Such an equation (MR) occurs in connection with different mathematical theories and problems; for instance with the minimum of binary quadratic forms, the Markoff spectrum and diophantine approximation (\cite{Ba5}, \cite{BLS}, \cite{LS}, \cite{M}, \cite{R2}, \cite{Sch}); with simple closed geodesics on certain Riemann and Fricke surfaces (\cite{BLS}, \cite{C2}, \cite{LS}, \cite{Sh2}, \cite{W}); with the classification of arithmetic hyperbolic surface bounds (\cite{BMR}); with the construction of series of noncongruence subgroups of the modular group and automorphisms groups of Riemann and Klein surfaces (\cite{R3}); with the generators of free groups of rank two (\cite{C1}, \cite{C2}, \cite{R4}) and with discreteness conditions for groups of $2 \times 2$ matrices (\cite{K-IR}, \cite{R5}). Goldman \cite{G} considers the automorphism group $\Gamma$ of the polynomial
\begin{align*}
k(x,y,z)=x^2+y^2+z^2-xyz-2
\end{align*}
and for $t \in \mathbb R$ the $\Gamma$-action on $k^{-1}(t) \cap \mathbb R^3$. Of special interest is the connection of the Markoff equation with the classification and description of the quiver algebras with three vertices (\cite{BBH}). Several results concern with the solutions of an equation of type (MR) in the integers, in the rational numbers and, more generally, in algebraic number fields (\cite{Ba1}, \cite{Ba3}, \cite{Ba4}, \cite{BR}, \cite{G-JT}, \cite{K-IR}, \cite{Mo1}, \cite{Mo2}, \cite{R2}, \cite{R5}, \cite{S1}, \cite{S2}, \cite{Sh1}) or with the asymptotic behavior and the growth of the integral solutions (\cite{Ba2}, \cite{Ba6}, \cite{BU}, \cite{Z}).
Hence, we here do not especially look at the equations of type (MR). They are considered just as a part of a general theory for the solvability of a generalized Hurwitz equation \eqref{GH} in integers.

The paper is based on notes we made in connection with the references \cite{H}, \cite{K} and \cite{R1}. We publish these now together with new results because we realized an upcoming interest in the Hurwitz equation (see for instance \cite{Ba1}, \cite{Ba3}, \cite{Ba6}, \cite{B}, \cite{He} and \cite{Hi}). Also we observed the existence of an interesting secret sharing protocol based on the Hurwitz equation.

In section \ref{sec2} we describe this secret sharing protocol and some known and new results for the special case of the Hurwitz equation.

In section \ref{sec3} we give solvability results for the generalized Hurwitz equation. We show especially
for a fixed $n \geq 3$ that there is a solution in natural numbers only for finitely many $a_1,\ldots,a_n,d$ and that then the number of fundamental solutions is finite (see Lemma \ref{L3} and Theorem \ref{T6}).

Finally, Baragar and Umeda \cite{BU} suggested to consider the diophantine equation
\begin{align}\label{BU}
ax^2+by^2+cz^2=dxyz+e
\end{align}
with $a,b,c,d,e \in \mathbb N$ such that $a|d$, $b|d$, $c|d$ and $gcd(a,b,c)=1$.
It is obvious that we here assume $gcd(a,b,c)=1$ because if $gcd(a,b,c)=t$ then $t|e$.\\
Baragar and Umeda ask for the existence of fundamental solutions\newline $(x,y,z)~\in~\mathbb N^3$ with
\begin{align*}
1 \leq x \leq \frac{d}{2a}yz, \quad
1 \leq y \leq \frac{d}{2b}xz \quad \text{and} \quad
1 \leq z \leq \frac{d}{2c}xy
\end{align*}
(see Lemma \ref{L3}).\\
They give for $e=1$ the complete list of equations \eqref{BU} which have fundamental solutions, and then they calculate these fundamental solutions.

In section \ref{sec4} we prove that necessarily $1 \leq e \leq 4$ for an equation \eqref{BU} to have a fundamental solution. We also give for $1 \leq e \leq 3$ a complete description of all the possible equations \eqref{BU} together which the classification of the solutions in natural numbers. We remark that if $e=4$ in equation \eqref{BU} then we have infinitely many fundamental solutions, which we completely describe.
\section{The Hurwitz equation}\label{sec2}

In this section we first give a survey of some known results about the solvability of the Hurwitz equation in integers. It is
\begin{align}\label{H}
x_1^2+ \cdots + x_n^2 = dx_1\cdots x_n - k
\end{align}
with $k \in \mathbb N \cup \{0\}$, $d \in \mathbb N$, $n \geq 3$. At the end of this section we describe the secret sharing protocol based on the Hurwitz equation.\\

 We start with the solvability of \eqref{H} in the integers.\\
For this, first of all, we may restrict ourselves to the case $k=0$ because 
\begin{align*}
k=x^2_{n+1}+ \cdots + x^2_{n+k} \quad  \text{ with } \ x_{n+1}=\cdots =x_{n+k}=1.
\end{align*} 
Hence from now on let $k=0$, that is
\begin{align}\label{1}
x_1^2+ \cdots + x_n^2 = dx_1 \cdots x_n
\end{align}
with $n \geq 3$, $d \in \mathbb N$.\\
Since $d \in \mathbb N$ and $x_1^2+ \cdots + x_n^2 \geq 0$ we may restrict ourselves to the solvability of \eqref{1} in natural numbers.\\
Let 
\begin{align*}
L_n:=L_{n,d}:=\{ (x_1,\ldots, x_n) \in \mathbb N^n | (x_1,\ldots, x_n) \text{ is a solution of \eqref{1}} \}.
\end{align*}
The set $L_n$ can be empty, for instance if $n=3$, $d=1$ or $n=8$, $d=5$ (see~Theorem \ref{T2}).
In what follows we assume that $L_n \ne \emptyset$. Also we often write $\overline{x}$ for $(x_1,x_2,\ldots,x_n)$. \\
The following maps are permutations from $L_n$ onto $L_n$:
\begin{align*}
\varphi : (x_1,\ldots, x_n) &\mapsto (x_2,x_1,x_3,\ldots, x_n),\\
\omega : (x_1,\ldots, x_n) &\mapsto (x_n,x_1,\ldots, x_{n-1}) \text{ and }\\
\psi : (x_1,\ldots, x_n) &\mapsto (x'_1,x_2,\ldots, x_n)\text{ with } x'_1=dx_2\cdots x_n - x_1.
\end{align*}
Let $M_n:=M_{d,n}:=$ $< \varphi, \omega, \psi >$ be the permutation group generated by $\varphi, \omega$ and 
$\psi$. We remark that $< \varphi, \omega >$ $=S_n$, with $S_n$ the full permutation group of the $n$ symbols $x_1,\ldots,x_n$.

\begin{thm}\label{T1}\cite{R1}.~\\
If $L_n \ne \emptyset$ then $M_n$ can be described by the generators $\varphi, \omega, \psi$ and the following defining relations:
\begin{enumerate}
\item $\varphi^2 = \omega ^n = (\varphi \omega)^{n-1} = (\varphi \omega^2 \varphi \omega^{-2})^2 = (\varphi \omega \varphi \omega^{-1})^{3} = \psi ^2 = (\psi \omega \varphi \omega^{-1})^2=$ \newline $= \psi \varphi \omega \psi \omega^{-1}\varphi = 1$ \  if $n \geq 4$ and 
\item $\varphi^2 = \omega^3 =(\varphi \omega)^2 = \psi^2 = (\psi \varphi \omega)^2 =1$ \ if $n=3$.
\end{enumerate}
\end{thm}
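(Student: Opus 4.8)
The plan is to exhibit $M_n$ as an amalgamated free product and to read the presentation off from the two factors. Write $G$ for the abstract group defined by the generators $\varphi,\omega,\psi$ and the stated relations (in the case $n\ge 4$, respectively $n=3$), and let $\rho\colon G\to M_n$ be the homomorphism sending the abstract generators to the three permutations of $L_n$. This $\rho$ is automatically surjective; the real work is to check that it is well defined (that the relations genuinely hold) and, above all, that it is injective.

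First I would verify the relations as identities of maps on $L_n$. For the relations involving only $\varphi,\omega$ this is the classical fact (Moore's presentation) that the transposition $\varphi=(1\,2)$ and the $n$-cycle $\omega$ generate $S_n$ subject to exactly $\varphi^2=\omega^n=(\varphi\omega)^{n-1}=(\varphi\omega^2\varphi\omega^{-2})^2=(\varphi\omega\varphi\omega^{-1})^3=1$ (resp.\ $\varphi^2=\omega^3=(\varphi\omega)^2=1$ when $n=3$), which is consistent with $\langle\varphi,\omega\rangle=S_n$ as already noted in the excerpt. For the relations involving $\psi$, I would read $\psi=\psi_1$ as the Vieta involution $x_1\mapsto dx_2\cdots x_n-x_1$ on the first coordinate. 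It satisfies $\psi^2=1$, and since the replacement value is symmetric in $x_2,\dots,x_n$ one checks $\tau\psi_1\tau^{-1}=\psi_{\tau(1)}$ for $\tau\in S_n$; hence $\psi_1$ commutes with $\tau$ precisely when $\tau\in\mathrm{Stab}(1)=S_{n-1}$. The elements $\omega\varphi\omega^{-1}$ and $\varphi\omega$ are exactly the transposition $(2\,3)$ and the $(n-1)$-cycle on $\{2,\dots,n\}$ that generate this stabilizer, and the relations $(\psi\,\omega\varphi\omega^{-1})^2=1$ and $\psi\varphi\omega\psi\omega^{-1}\varphi=1$ say nothing more than that $\psi$ commutes with these two generators (for $n=3$ the stabilizer $S_2$ is cyclic, which is why the single relation $(\psi\varphi\omega)^2=1$ suffices).

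These observations identify $G$ with the amalgamated free product $S_n\ast_{S_{n-1}}\bigl(S_{n-1}\times\langle\psi\rangle\bigr)$, where $S_{n-1}=\mathrm{Stab}(1)$ sits in $S_n$ in the usual way and commutes with the extra involution $\psi$ in the second factor. It then remains to prove that this amalgam does not collapse under $\rho$. For this I would use the arithmetic of Vieta jumping: assigning to a solution the height $h(\overline{x})=x_1\cdots x_n$, one shows that from each non-fundamental solution there is a unique involution $\psi_i$ that strictly decreases $h$, and that iterated descent terminates at a fundamental solution (the finiteness and descent facts underlying Lemma~\ref{L3}). This organizes $L_n$ into a tree on which $M_n$ acts without inversion, with vertex stabilizers conjugate to $S_n$ and to $S_{n-1}\times\langle\psi\rangle$ and edge stabilizers conjugate to $S_{n-1}$; by Bass--Serre theory, equivalently by the normal form theorem for amalgamated products, this identifies $M_n$ with the amalgam and forces $\ker\rho=1$.

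The main obstacle is this last step: establishing that the Vieta involutions realize the tree structure faithfully, i.e.\ that no reduced word in the amalgam acts as the identity on $L_n$. Concretely the crux is the monotonicity of $h$ under the $\psi_i$ together with the termination of descent at fundamental solutions; once these arithmetic inputs are secured, the normal-form argument is routine and the presentation drops out. I would handle $n\ge 4$ and $n=3$ uniformly through the stabilizer description above, the only difference being the number of generators needed for $\mathrm{Stab}(1)$, which accounts for the two mixed relations when $n\ge 4$ versus the single one when $n=3$.
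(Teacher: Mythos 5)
Your reading of the relations is correct as far as it goes: the two mixed relations say exactly that $\psi$ commutes with $\omega\varphi\omega^{-1}$ and with $\varphi\omega$, whose images are $(2\,3)$ and $(2\,3\,\cdots\,n)$ and generate $\mathrm{Stab}(1)$, so the abstractly presented group $G$ is formally the amalgam $K *_C \bigl(C\times\langle\psi\rangle\bigr)$, where $K$ is the group defined by the $\varphi,\omega$-relations alone and $C=\langle\omega\varphi\omega^{-1},\varphi\omega\rangle\le K$. The first genuine gap is your claim $K\cong S_n$. The five relations $\varphi^2=\omega^n=(\varphi\omega)^{n-1}=(\varphi\omega^2\varphi\omega^{-2})^2=(\varphi\omega\varphi\omega^{-1})^3=1$ are \emph{not} Moore's presentation: Moore's theorem needs the commuting relations $(\varphi\omega^{j}\varphi\omega^{-j})^2=1$ for all $2\le j\le\lfloor n/2\rfloor$, and Theorem \ref{T1} lists only $j=2$. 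For $n=4,5$ the two lists agree, but for $n\ge 6$ relations such as $(\varphi\omega^{3}\varphi\omega^{-3})^2=1$ are absent, so the classical fact you invoke does not apply. The $\psi$-relations cannot repair this: in any amalgamated product $K *_C (C\times\mathbb{Z}_2)$ the factor $K$ embeds, so $\langle\varphi,\omega\rangle_G\cong K$ regardless of the relations involving $\psi$. Hence the theorem is true only if the five listed relations by themselves define $S_n$ (equivalently, only if the missing Moore relations are consequences of them); the paper's remark $\langle\varphi,\omega\rangle=S_n$ gives only the surjection $K\twoheadrightarrow S_n$. Establishing the corresponding injectivity is a genuine, and for $n\ge 6$ nontrivial, part of any proof, and your proposal contains no argument for it.

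Second, the Bass--Serre step does not work as described. The graph on $L_n$ whose edges are the Vieta moves is in general a forest, not a tree: its components correspond to the fundamental solutions, of which there may be several (Herzberg's examples quoted in Section \ref{sec2}: for $d=1$ one has $|F_{14}|=2$ and $|F_{19}|=3$), so $M_n$ does not act transitively and the quotient graph is not a single edge. Moreover the stabilizer of a vertex $\overline{x}\in L_n$ is not a conjugate of $S_n$; its intersection with $S_n$ is already trivial whenever the coordinates of $\overline{x}$ are pairwise distinct. So this graph cannot be fed to Bass--Serre theory in the way you describe. What actually has to be shown is that no nontrivial normal form $\sigma_0\psi\sigma_1\psi\cdots\psi\sigma_k$ (with $\sigma_1,\ldots,\sigma_{k-1}\notin\mathrm{Stab}(1)$) acts as the identity on $L_n$, and this needs more than ``monotonicity plus termination'': one must handle solutions fixed by some $\psi_i$, where the height does not strictly increase, and one must separate $\langle\psi_1,\ldots,\psi_n\rangle$ from $S_n$. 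Note finally that the paper itself does not prove Theorem \ref{T1}, which is quoted from \cite{R1}; the analogous presentation result that the paper does prove, at the start of Section \ref{sec3}, is obtained by passing from integral to \emph{real} solutions and invoking Satz 1 of \cite{R1}, precisely to gain the room for such a growth argument that the integral solution set alone may not provide.
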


\begin{cor}\label{C1}\cite{R1}.
\begin{enumerate}
\item If $3 \leq m \leq n$ then $M_m$ is a subgroup of $M_n$.
\item $M_3 \cong PGL(2, \mathbb Z)$ and $M_3^{0} \cong PSL(2,\mathbb Z)$, where $M_3^{0}$ is generated by the permutations
\begin{align*}
\rho=\psi \varphi \omega:& (x_1,x_2,x_3) \mapsto (dx_2x_3 - x_1, x_3,x_2) \qquad \text{and}\\
\omega:& (x_1,x_2,x_3) \mapsto (x_3,x_1,x_2).
\end{align*}
\end{enumerate}
\end{cor}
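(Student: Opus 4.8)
The plan is to derive both parts from the presentations in Theorem~\ref{T1}, exploiting that these presentations depend only on the number of variables and not on $d$ or $k$.

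\textbf{Part (1).} I would first reduce to the single step $M_{n-1}\le M_n$ and then compose along the chain $M_m\le M_{m+1}\le\cdots\le M_n$. To embed $M_{n-1}$ into $M_n$, consider inside $M_n$ the three permutations $\varphi$ (the transposition of $x_1,x_2$), the $(n-1)$-cycle $\omega'$ acting on $x_1,\ldots,x_{n-1}$ and fixing $x_n$ (which lies in $\langle\varphi,\omega\rangle=S_n$), and $\psi$ itself. All three fix the coordinate $x_n$, so they preserve the subset $V=\{\overline{x}\in L_n : x_n=1\}$. On $V$ the equation \eqref{1} becomes the $(n-1)$-variable equation with $k=1$, and $\psi$ restricts to $x_1\mapsto dx_2\cdots x_{n-1}-x_1$; thus $\varphi,\omega',\psi|_V$ are exactly the defining generators of the group attached to that $(n-1)$-variable equation. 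I would then check that $\varphi,\omega',\psi$ satisfy the defining relations of $M_{n-1}$ given in Theorem~\ref{T1} (with $n$ replaced by $n-1$): the purely symmetric relations hold automatically since $\langle\varphi,\omega'\rangle=S_{n-1}$, while the relations involving $\psi$ are verified as permutation identities on $L_n$. This yields a homomorphism $f\colon M_{n-1}\to M_n$. For injectivity I would use faithfulness on $V$: because Theorem~\ref{T1} shows the abstract group attached to the $(n-1)$-variable equation has the presentation of $M_{n-1}$ for any admissible $k$, the restriction-to-$V$ map reproduces the faithful defining action of $M_{n-1}$, so $\ker f$ acts trivially on $V$ and hence is trivial.

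\textbf{Part (2).} For $M_3$ I would rewrite the presentation of Theorem~\ref{T1}(2) in terms of the three involutions $a=\varphi$, $b=\varphi\omega$, $c=\psi$. Then $\omega=ab$ and, using $\psi\varphi\omega=cb$, the relations $\varphi^2=\omega^3=(\varphi\omega)^2=\psi^2=(\psi\varphi\omega)^2=1$ translate into $a^2=b^2=c^2=(ab)^3=(bc)^2=1$ with no relation between $a$ and $c$. This is precisely the Coxeter presentation of the $(2,3,\infty)$ triangle group, which is $PGL(2,\mathbb Z)$, giving $M_3\cong PGL(2,\mathbb Z)$. For $M_3^0=\langle\rho,\omega\rangle$ I would note that $\rho=\psi\varphi\omega=cb$ and $\omega=ab$ are products of pairs of reflections, hence lie in the orientation-preserving index-$2$ subgroup $\cong PSL(2,\mathbb Z)$. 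To see they generate all of it, observe that $ca=(cb)(ba)=\rho\omega^{-1}$, so $\langle\rho,\omega\rangle$ contains $ab$, $bc$ and $ca$, i.e. all the rotation generators; since $\rho$ has order $2$ and $\omega$ order $3$ with no further relation, $M_3^0\cong C_2*C_3\cong PSL(2,\mathbb Z)$.

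The main obstacle in (1) is the relation check for the mixed words involving $\psi$ and the truncated cycle $\omega'$, together with establishing faithfulness on the slice $V$ (one must produce enough solutions in $V$ so that no nontrivial element of $M_{n-1}$ fixes them all). In (2) the only delicate point is matching the triangle-group labels correctly and confirming that $\rho,\omega$ generate exactly the index-$2$ subgroup rather than a proper one; the computation $ca=\rho\omega^{-1}$ is what settles this.
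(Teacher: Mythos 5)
Your part (2) is correct and is the standard route: passing to the involutions $a=\varphi$, $b=\varphi\omega$, $c=\psi$ turns the presentation of Theorem~\ref{T1}(2) into the Coxeter presentation $\langle a,b,c\mid a^2=b^2=c^2=(ab)^3=(cb)^2=1\rangle$ of the extended $(2,3,\infty)$ triangle group, which is $PGL(2,\mathbb Z)$, and your identity $ca=\rho\omega^{-1}$ does show that $\langle\rho,\omega\rangle$ is exactly the even (orientation-preserving) subgroup, hence $\cong C_2*C_3\cong PSL(2,\mathbb Z)$. The only point left implicit is the ``no further relation'' claim, which is the standard Reidemeister--Schreier computation for the even subgroup of a full triangle group; it deserves a citation or two lines of proof.

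Part (1), however, has a genuine gap exactly at the step you yourself flag as the main obstacle: injectivity of $f$. The homomorphism exists -- $\varphi$, the truncated cycle $\omega'$ and $\psi$ do satisfy the $M_{n-1}$-relations of Theorem~\ref{T1} as permutation identities on all of $L_n$ -- but your kernel argument requires the restricted action on the slice $V=\{\overline{x}\in L_n : x_n=1\}$ to be a faithful defining action of $M_{n-1}$, and this can fail outright because $V$ may be empty even when $L_n\ne\emptyset$ and $L_{n-1}\ne\emptyset$. Concretely, take $n=4$, $d=1$: then $(2,2,2,2)\in L_4$ and $(3,3,3)\in L_3$, but a point of $V$ would give a solution of $x^2+y^2+z^2+1=xyz$ in natural numbers, and there is none: the usual descent produces from any solution one with $x\le y\le z\le\frac{1}{2}xy$, which forces $y^2(x-2)\le x^2+1$ and hence $x\le 3$; then $x=1$ gives $y^2+z^2-yz=-2$, $x=2$ gives $(y-z)^2=-5$, and $x=3$ forces $y=3$ with $z^2-9z+19=0$, all impossible. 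So in this case $V=\emptyset$, the restriction map lands in the trivial group, and nothing constrains $\ker f$ (your chain $M_m\le M_{m+1}\le\cdots\le M_n$ has the same fragility at every step, and in addition silently assumes all intermediate $L_j\ne\emptyset$). Note also that you invoke Theorem~\ref{T1} ``for any admissible $k$'', but in this paper it is stated only for $k=0$, i.e.\ for equation \eqref{1}. The statement of the corollary is of course still true; the way to repair your argument is the one underlying \cite{R1}: Satz 1 there establishes the presentation for the action on \emph{real} solutions of \eqref{3}, and over $\mathbb R$ a slice $x_n=t$ is nonempty for suitable $t$ and carries a faithful action, so the injectivity certificate must be extracted there (or by a purely combinatorial argument with the presentations), not from the integer solution set.
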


\begin{lem}\label{L1}\cite{H}, \cite{R1}.~\\
Let $L_n \ne \emptyset$ and $\overline{x}=(x_1,\ldots,x_n)\in L_n$.\newline Then there exists an $\overline{y}=(y_1,\ldots,y_n)\in L_n$ with $$1\leq y_1\leq y_2 \leq \cdots \leq y_n \leq \frac{d}{2}y_1y_2 \cdots y_{n-1}$$ and a permutation $\gamma \in M_n$ with $\gamma(\overline{y})=\overline{x}$.\\
If there exists another $\overline{z}=(z_1,\ldots,z_n) \in L_n$ with $$1\leq z_1\leq z_2 \leq \cdots \leq z_n \leq \frac{d}{2}z_1z_2 \cdots z_{n-1}$$ and some $\beta \in M_n$ with $\beta(\overline{y})= \overline{z}$ then $y_i = z_i$ for $i=1,\ldots,n$.
\end{lem}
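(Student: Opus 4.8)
The plan is to base both assertions on the classical Vieta‑jumping descent obtained by viewing \eqref{1} as a quadratic in one coordinate. Fixing $x_1,\dots,x_{n-1}$ and regarding \eqref{1} as a quadratic in the last variable gives
$$t^2-d(x_1\cdots x_{n-1})\,t+(x_1^2+\cdots+x_{n-1}^2)=0,$$
so if $x_n$ is one root, the other is $x_n'=d\,x_1\cdots x_{n-1}-x_n$, and by Vieta $x_n x_n'=x_1^2+\cdots+x_{n-1}^2>0$. Hence $x_n'$ is a positive integer and $(x_1,\dots,x_{n-1},x_n')\in L_n$. Since one may first move any chosen coordinate into the first slot by a permutation in $\langle\varphi,\omega\rangle=S_n$ and then apply $\psi$, this ``jump'' on an arbitrary coordinate is realized by an element of $M_n$.

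For existence, I would first apply a permutation in $S_n\subseteq M_n$ to arrange $x_1\le\cdots\le x_n$. If $x_n\le\frac d2 x_1\cdots x_{n-1}$ the tuple is already reduced. Otherwise $x_n>\frac d2 x_1\cdots x_{n-1}$, and jumping on $x_n$ produces $x_n'=d\,x_1\cdots x_{n-1}-x_n<\frac d2 x_1\cdots x_{n-1}<x_n$, so the jump strictly lowers the maximal coordinate, and therefore strictly lowers the sum $S(\overline x)=x_1+\cdots+x_n$. Reordering and repeating, the process terminates after finitely many steps because $S$ takes values in $\mathbb N$ and is bounded below by $n$, landing on a tuple $\overline y$ with $1\le y_1\le\cdots\le y_n\le\frac d2 y_1\cdots y_{n-1}$. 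The composite element of $M_n$ carrying $\overline x$ to $\overline y$ has an inverse $\gamma$ with $\gamma(\overline y)=\overline x$, which is the first claim.

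For uniqueness I would show that the reduced tuples are exactly the $S$‑minimal elements of an orbit, and that the minimizing multiset is unique. The decisive computation is the monotonicity of $S$ under jumps out of a reduced tuple: jumping on coordinate $i$ changes $S$ by $y_i'-y_i=d\prod_{j\ne i}y_j-2y_i$, and the reduced inequality together with $y_1\le\cdots\le y_n$ yields $d\prod_{j\ne i}y_j\ge d\prod_{j\ne n}y_j\ge 2y_n\ge 2y_i$, so no jump from a reduced tuple decreases $S$. Conversely the descent step shows that from any \emph{non}‑reduced ordered tuple, the jump on the maximal coordinate strictly decreases $S$; thus an ordered tuple is $S$‑minimal in its orbit if and only if it is reduced. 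I would also record that the descent-on-max move is well defined at the level of multisets even when the maximum is attained several times, since equal maximal entries give equal jumped values.

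To finish, I would organize the orbit as a graph whose edges are the $n$ jumps. Because $\psi$ (and each conjugate jump) is an involution, every edge can be traversed both ways, and the descent assigns to each non‑reduced node a unique neighbour of strictly smaller sum, its ``parent''. Following parents from any node strictly decreases $S$ and terminates at a reduced tuple, so the orbit is a tree whose minima are the reduced tuples; connectedness of the orbit then forces a single such root, giving $y_i=z_i$ for all $i$. The main obstacle is precisely establishing this tree (acyclicity/single‑root) property rigorously, that is, verifying that a jump on a non‑maximal coordinate is always the inverse of a descending jump so that every path back through parents reaches the \emph{same} reduced tuple; this confluence is exactly what is encoded in the presentation of $M_n$ in Theorem \ref{T1}, and the mild boundary cases where $y_i'=y_i$ must be checked separately.
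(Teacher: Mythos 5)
Your existence half is correct and is essentially the standard descent argument (it matches what the paper does in part (1) of Theorem \ref{T5} for the generalized equation; for Lemma \ref{L1} itself the paper only cites \cite{H}, \cite{R1}). The uniqueness half, however, has a genuine gap, and you flag it yourself: you reduce everything to a confluence/single-root property of the orbit graph (that the orbit is a tree with exactly one reduced tuple at the bottom), and you never prove it. Connectedness of the orbit plus a parent pointer at every non-reduced node does not force a single root: a priori a path could climb out of one reduced tuple and descend into a different one, and nothing you wrote excludes this. The parent is not even well defined as \emph{the} sum-decreasing neighbour, since several jumps may decrease the sum. Appealing to the presentation of $M_n$ in Theorem \ref{T1} does not repair this: that presentation is itself obtained in \cite{R1} from precisely this kind of orbit analysis, and in any case you do not carry the deduction out.

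The missing idea --- and the way the paper closes exactly this gap in its proof of the analogous Theorem \ref{T5}(2) --- is a maximum (``mountain pass'') argument on a path joining two reduced solutions. Suppose $\overline{x}_0,\ldots,\overline{x}_p$ are pairwise distinct solutions, consecutive ones differing by a single jump, with $\overline{x}_0$ and $\overline{x}_p$ both reduced and $p\ge 2$. Choose $s$ with $h(\overline{x}_s)=x_1+\cdots+x_n$ maximal along the path; since the endpoints are reduced, $0<s<p$. The neighbours $\overline{x}_{s-1}$ and $\overline{x}_{s+1}$ must be jumps of $\overline{x}_s$ on two \emph{distinct} coordinates $i\ne j$ (a jump on the same coordinate would give $\overline{x}_{s-1}=\overline{x}_{s+1}$, contradicting distinctness). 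Writing $\overline{x}_s=(x_1,\ldots,x_n)$, the inequalities $h(\overline{x}_{s\pm1})\le h(\overline{x}_s)$ translate into $2x_i^2\ge dx_1\cdots x_n$ and $2x_j^2\ge dx_1\cdots x_n$, and adding them gives
\begin{align*}
x_i^2+x_j^2 \;\ge\; dx_1\cdots x_n \;=\; x_1^2+\cdots+x_n^2,
\end{align*}
which is impossible for $n\ge 3$ with all coordinates positive. Hence two distinct reduced tuples never lie in the same $M_n$-orbit --- no tree structure, no confluence, and no presentation of $M_n$ is needed. (For Lemma \ref{L1} the group also contains the permutations $\varphi,\omega$; these change neither the height nor the multiset of coordinates, so the argument goes through verbatim at the level of multisets, and the ordering $y_1\le\cdots\le y_n$ then pins down the tuple itself.)
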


\begin{defn}
We call an $\overline{y}=(y_1,\ldots, y_n) \in L_n$ with $$1\leq y_1\leq y_2 \leq \cdots \leq y_n \leq \frac{d}{2}y_1y_2 \cdots y_{n-1}$$ a fundamental solution of \eqref{1} for $d$.
\end{defn}

\begin{lem}\label{L2}\cite{H}~\\
If $L_n \ne \emptyset$ then for each $d \in \mathbb N$ there are only finitely many fundamental solutions of \eqref{1}.
\end{lem}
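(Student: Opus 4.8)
The plan is to show that for a fixed $d$, every fundamental solution $\overline{y}=(y_1,\ldots,y_n)$ of \eqref{1} has all of its coordinates bounded above by a constant depending only on $n$ and $d$; since the coordinates are natural numbers, this forces finiteness. The starting point is the quadratic nature of \eqref{1} in the last variable. Viewing \eqref{1} as a quadratic equation in $y_n$,
$$y_n^2 - (d\,y_1\cdots y_{n-1})\,y_n + (y_1^2+\cdots+y_{n-1}^2)=0,$$
its second root is $y_n' = d\,y_1\cdots y_{n-1}-y_n$ (the analogue of the jump map $\psi$ acting on the last coordinate), and the product of the two roots is $y_ny_n' = y_1^2+\cdots+y_{n-1}^2$. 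The defining inequality $y_n\leq \tfrac{d}{2}y_1\cdots y_{n-1}=\tfrac12(y_n+y_n')$ of a fundamental solution says precisely that $y_n$ is the smaller root, $y_n\leq y_n'$, and hence
$$y_n^2 \leq y_ny_n' = y_1^2+\cdots+y_{n-1}^2. \qquad (\ast)$$

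The key step is to convert $(\ast)$ into a bound on the product of the smaller coordinates. Using $(\ast)$ together with the ordering $y_i\leq y_{n-1}$ for $i\leq n-1$, the right-hand side of \eqref{1} satisfies
$$d\,y_1\cdots y_n = \sum_{i=1}^n y_i^2 \leq 2\sum_{i=1}^{n-1} y_i^2 \leq 2(n-1)\,y_{n-1}^2.$$
On the other hand $y_n\geq y_{n-1}$ gives $d\,y_1\cdots y_n \geq (d\,y_1\cdots y_{n-2})\,y_{n-1}^2$. Cancelling $y_{n-1}^2$ yields
$$d\,y_1\cdots y_{n-2} \leq 2(n-1).$$
In particular each of $y_1,\ldots,y_{n-2}$ is at most $2(n-1)$, so there are only finitely many choices for the tuple $(y_1,\ldots,y_{n-2})$.

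It then remains to bound the two largest coordinates. Fixing $(y_1,\ldots,y_{n-2})$ and writing $A=\sum_{i=1}^{n-2}y_i^2\geq 1$ and $B=d\,y_1\cdots y_{n-2}\geq 1$, equation \eqref{1} becomes $y_{n-1}^2+y_n^2-B\,y_{n-1}y_n=-A$. For $B\leq 2$ the left-hand side is non-negative (the case $B=2$ gives $(y_{n-1}-y_n)^2$, and $B=1$ gives a positive definite binary form), contradicting $-A<0$, so no fundamental solution occurs. For $B\geq 3$, combining $(\ast)$, which now reads $y_n^2\leq y_{n-1}^2+A$, with $y_n\geq y_{n-1}$ in \eqref{1} gives $(B-2)\,y_{n-1}^2\leq 2A$, which bounds $y_{n-1}$, and then $y_n^2\leq y_{n-1}^2+A$ bounds $y_n$. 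Thus all coordinates are bounded and the lemma follows.

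I expect the main obstacle to be precisely this final step: bounding the product $y_1\cdots y_{n-2}$ does not by itself control the two largest coordinates, and one must analyse the reduced two-variable equation separately, in particular ruling out the degenerate cases $B\in\{1,2\}$ in which the associated binary quadratic form is (semi)definite and no solution can exist.
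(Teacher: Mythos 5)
Your proof is correct, and it is essentially the argument the paper uses --- though not for this lemma. The paper states Lemma \ref{L2} without proof, citing Hurwitz \cite{H}; its own finiteness machinery appears only later, for the generalized equation, as Lemma \ref{L4} together with Case II of the proof of Theorem \ref{T6}. Your proof is precisely that two-step strategy specialized to $a_1=\cdots=a_n=1$: first bound $d\,y_1\cdots y_{n-2}$ by a constant depending only on $n$, then analyse the reduced two-variable quadratic. The differences are minor. For the first step the paper exploits the monotonicity of $f(t)=x_1^2+\cdots+x_{n-1}^2+t^2-dx_1\cdots x_{n-1}t$ on $\left[0,\tfrac{d}{2}x_1\cdots x_{n-1}\right]$ to get $f(x_{n-1})\geq f(x_n)=0$, yielding the sharper bound $dx_1\cdots x_{n-2}\leq n$, whereas your Vieta identity $y_ny_n'=y_1^2+\cdots+y_{n-1}^2$ gives $y_n^2\leq y_1^2+\cdots+y_{n-1}^2$ and the weaker (but equally sufficient) bound $2(n-1)$; both rest on the same underlying fact, namely that a fundamental solution has $y_n$ at most the vertex of the parabola. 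For the second step the paper derives the dichotomy $x_n\leq 2c_1$ or $\tfrac{c_2}{2}x_{n-1}-x_n=0$ (the degenerate case then pins $(x_{n-1},x_n)$ down uniquely via $x_{n-1}^2(c_2^2-4)=4c_1$), whereas you split on $B\leq 2$ versus $B\geq 3$, killing the first case by (semi)definiteness of the binary form and bounding $y_{n-1}$ in the second via $(B-2)y_{n-1}^2\leq 2A$. Your handling of the two-variable step is arguably cleaner in the Hurwitz case, since it avoids the degenerate equality case entirely; the paper's version buys the extra generality needed for arbitrary coefficients $a_1,\ldots,a_n$.
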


\begin{rem}
 If we want to find all solutions $\overline{x} \in L_n$ then it is enough to find all fundamental solutions of \eqref{1}. Let $F_n$ be the number of fundamental solutions of \eqref{1}. It is possible that $|F_n| \geq 2$ for $d$ if $L_n \ne \emptyset$.
\end{rem}

\begin{ex}
For $n=95$ and $d=1$ there are the fundamental solutions
\begin{align*}
(1, \ldots, 1, 3, 9, 13) \qquad  \text{ and } \qquad (1, \ldots, 1, 4, 6, 12).
\end{align*}
This cannot happen if $3 \leq n \leq 2d$.
\end{ex}

\begin{thm}\label{T2}\cite{R1}\\
If $3 \leq n \leq 2d$ then $L_n \ne \emptyset$ if and only if one of the following cases holds:
\begin{enumerate}
\item $n=d$;
\item $n=6$, $d=3$;
\item $n=7,10,13,16$ and $d=5,6,7,8$, respectively;
\item $n \equiv 1  \pmod 2$, $d=\frac{n+3}{2}$.
\end{enumerate}
In all these cases the group $M_n$ operates transitively on $L_n$, that is, in all these cases there exists exactly one fundamental solution of \eqref{1} for $d$, and these are given as follows:

	\begin{table}[h]
	\begin{center}
	\begin{tabular}{c | c | c| c | c }
	$n$ &$d$ & $x_i$ \text{ for } $i=1,\ldots,n-2$  &$ x_{n-1}$ & $x_n$\\
	\hline
	$n$ & $n$ & $1$ & $1$ & $1$\\
	$6$ & $3$ & $1$ & $2$ & $2$\\
	$7$ & $5$ & $1$ & $1$ & $2$\\
	$10$ & $6$ & $1$ & $1$ & $3$\\
	$13$ & $7$ & $1$ & $1$ & $3$\\
	$16$ & $8$ & $1$ & $1$ & $3$\\
	$5 \leq n$, $n \equiv 1 \pmod2$& $\frac{n+3}{2}$& $1$ & $1$ & $2$ \\
	\end{tabular}
	\end{center}
	\end{table}
\end{thm}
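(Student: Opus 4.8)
The plan is to reduce everything to fundamental solutions by means of Lemma \ref{L1}. That lemma shows every $M_n$-orbit in $L_n$ contains exactly one sorted fundamental solution $1\le y_1\le\cdots\le y_n\le\frac d2 y_1\cdots y_{n-1}$; hence $L_n\neq\emptyset$ is equivalent to the existence of a fundamental solution, and the transitivity of $M_n$ on $L_n$ is equivalent to there being only one fundamental solution. So it suffices to classify, under $3\le n\le 2d$, all fundamental solutions of \eqref{1} and to check that each surviving configuration is uniquely determined.

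First I would record two inequalities. Reading \eqref{1} as a quadratic in $y_n$, namely $y_n^2-d(y_1\cdots y_{n-1})y_n+(y_1^2+\cdots+y_{n-1}^2)=0$, the fundamental condition $2y_n\le d\,y_1\cdots y_{n-1}$ says exactly that $y_n$ is the smaller of the two roots; since the roots multiply to $y_1^2+\cdots+y_{n-1}^2$, this gives the crucial bound $y_n^2\le y_1^2+\cdots+y_{n-1}^2$. Second, dividing \eqref{1} by $y_1\cdots y_n$ gives $d=\sum_{i=1}^n y_i/\prod_{j\ne i}y_j$, where each term with $i<n$ is at most $y_i/y_n\le 1$ and the term $i=n$ is at most $d/2$; hence $d\le 2(n-1)$, and with the hypothesis we obtain $\frac n2\le d\le 2n-2$.

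Now let $r$ be the number of coordinates exceeding $1$; write the large ones as $2\le z_1\le\cdots\le z_r=y_n$, so that \eqref{1} reads $(n-r)+\sum_{\ell=1}^r z_\ell^2=d\,z_1\cdots z_r$. If $r=0$ then $n=d$ (case 1). If $r=1$ then $z^2-dz+(n-1)=0$; setting $z'=d-z$ we have $n-1=zz'$ with $2\le z\le z'$ and $z+z'=d$, and the hypothesis $n\le 2d$ rearranges to $(z-2)(z'-2)\le 3$. This finite condition has exactly the solutions $z=2$ (giving $n=2d-3$ odd, $d=\frac{n+3}2$, tuple $(1,\ldots,1,2)$, case 4, which also accounts for $n=7,d=5$) and $z=3$ with $z'\in\{3,4,5\}$ (giving $(n,d)=(10,6),(13,7),(16,8)$, tuple $(1,\ldots,1,3)$, case 3). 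In each instance $z,z'$ and hence the whole tuple are determined uniquely.

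The heart of the proof is the range $r\ge 2$. Here I would again view \eqref{1} as a quadratic in $y_n$, now with product coefficient $w=z_1\cdots z_{r-1}\ge 2^{r-1}$ and constant term $C=(n-r)+\sum_{\ell<r}z_\ell^2=y_n y_n'$, so that $y_n^2\le C$ and $n\le 2d=2(y_n+y_n')/w$. For $r=2$ with $z_1=2$ this reduces, exactly as in the previous step, to $(z_2-1)(z_2'-1)\le 3$, whose only admissible solution is $n=6,d=3$ with tuple $(1,1,1,1,2,2)$ (case 2). For $r=2$ with $z_1\ge 3$, and for every $r\ge 3$, I expect the combination of $y_n^2\le C$, the bound $w\ge 2^{r-1}$, and $\frac n2\le d\le 2n-2$ to force $n$, $r$ and all the $z_\ell$ into a small explicit finite range, after which one checks directly that no further fundamental solution exists. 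Establishing these exclusions cleanly, that is, proving a sharp enough bound on $r$ and on the few large coordinates so that only a short finite search remains, is the main obstacle, since the inequalities above are individually rather loose and must be combined carefully. Once this is done, the uniqueness of the tuple in each of the cases 1--4 yields exactly one fundamental solution, and Lemma \ref{L1} converts this into the asserted transitivity of $M_n$ on $L_n$, completing both the classification and the transitivity statement.
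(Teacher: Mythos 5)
Your reduction via Lemma \ref{L1}, your two Vieta-type inequalities, and your treatment of the cases $r=0$, $r=1$, and $r=2$ with $z_1=2$ are all correct and do recover exactly cases 1--4 of Theorem \ref{T2} with the tuples in the table. But the proposal has a genuine gap, which you flag yourself: the exclusion of $r=2$ with $z_1\ge 3$ and of all $r\ge 3$ is never carried out, only announced as an expectation. This is not a routine verification that can be deferred: the three inequalities you have in hand ($y_n^2\le y_1^2+\cdots+y_{n-1}^2$, $w\ge 2^{r-1}$, and $\frac n2\le d\le 2n-2$) do not by themselves confine the problem to a finite search, because $r$ can a priori be as large as $n$ and the number of free parameters $z_1,\ldots,z_r$ grows with $r$; what is missing is a uniform bound on $r$ (equivalently, on the product of the $n-2$ smallest coordinates), and that is precisely the hard point of the theorem. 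Note also that the paper itself does not prove Theorem \ref{T2} but cites \cite{R1} for it, so the fair comparison is with the tools the paper does develop.

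The missing idea is in fact available in the paper: Lemma \ref{L4} (or its sharper form, Lemma \ref{L5}), specialized to $a_1=\cdots=a_n=1$, says that a sorted fundamental solution satisfies $d\,x_1\cdots x_{n-2}\le n$, with equality if and only if all coordinates coincide. Combined with the hypothesis $n\le 2d$ this gives $x_1\cdots x_{n-2}\le 2$; and the value $2$ would force $n=2d$ and equality in the lemma, hence all coordinates equal, which is impossible (for $n\ge 4$ a product of $n-2\ge 2$ equal factors cannot be $2$, and for $n=3$ it forces $d=\frac32\notin\mathbb N$). Hence $x_1=\cdots=x_{n-2}=1$, i.e.\ $r\le 2$ from the outset, and the whole theorem collapses to the two-variable equation $(n-2)+u^2+v^2=duv$ with $1\le u\le v\le \frac d2 u$, which your own Vieta analysis then finishes: $u=1$ gives $(v-2)(v'-2)\le 3$ (cases 1, 3, 4), $u=2$ gives $(v-1)(v'-1)\le 3$ (case 2), and for $u\ge 3$ the chain $nu\le 2du=2(v+v')\le 4v'$ together with $v'\le (u^2+n-2)/u$ yields $(n-4)u^2\le 4(n-2)$, hence $n\le 5$, and these last few cases die by inspection (for $n\le 4$, $v^2\le vv'=u^2+n-2$ forces $v=u$ and then $u\mid n-2$, contradicting $u\ge3$). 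This also settles transitivity, since for each admissible $(n,d)$ the pair $(u,v)$ is then unique. Without Lemma \ref{L4} or an equivalent bound, your proposal is a correct strategy for the easy half of the classification but not yet a proof.
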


\begin{rem}
Some more numerical results are stated in \cite{B} and \cite{He}.
\end{rem}

Herzberg \cite{He} gave an efficient algorithm to find pairs $(d,n)$ with $d<n$ for which \eqref{1} has nontrivial solutions in $L_n$.
He was the first who published examples with $|F_n|\geq 2$, in fact if $d=1$ then $|F_{14}|=2$ and $|F_{19}|=3$.
Baragar~\cite{Ba1} described the frequency such that $F_n \ne \emptyset$ for any fixed $n$.

\begin{defn}
For $n \in \mathbb N$, $n \geq 3$, let
\begin{align*}
A(n)= \# \{d \in \mathbb N | F_n \ne \emptyset\}.
\end{align*}
\end{defn}

\begin{thm}\cite{Ba1}~\\
For every $\epsilon >0$,
\begin{align*}
A(n) = O(n^{\frac{1}{2}+\epsilon}).
\end{align*}
\end{thm}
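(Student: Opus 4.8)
The plan is to bound $A(n)$ by counting fundamental solutions and then to exploit the divisibility constraint $\prod_i y_i \mid \sum_i y_i^2$. By Lemma \ref{L1} every $d$ with $L_n\neq\emptyset$ carries at least one fundamental solution $\overline y=(y_1,\dots,y_n)$ with $1\le y_1\le\cdots\le y_n$, and since $d$ is determined by the solution, $A(n)$ is at most the number of fundamental solutions of \eqref{1} for the given $n$. Such a solution has $n-s$ entries equal to $1$ and $s$ entries $b_1\le\cdots\le b_s$ with $b_j\ge 2$ (the case $s=0$ being the all-ones solution with $d=n$). Writing $R:=\prod_{i<n}y_i=\prod_{j<s}b_j$ and $\Sigma:=\sum_{i<n}y_i^2=(n-s)+\sum_{j<s}b_j^2$, the equation reads $d\,R\,b_s=\Sigma+b_s^2$, so $b_s$ and its Vieta partner $b_s'=\Sigma/b_s$ are the roots of $t^2-(dR)t+\Sigma=0$; the fundamental inequality $y_n\le\frac d2\prod_{i<n}y_i$ is exactly $b_s\le b_s'$. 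I will split on the size of $d$: the values $d\le\sqrt n$ contribute at most $\lceil\sqrt n\rceil=O(n^{1/2})$ to $A(n)$ at once, so it remains to count admissible $d>\sqrt n$.

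The discriminant must be a perfect square, $(dR)^2-4\Sigma=m^2$ with $m\ge 0$, i.e. $(dR-m)(dR+m)=4\Sigma$. Hence, once the ``lower part'' $(b_1,\dots,b_{s-1})$ is fixed (so $R$ and $\Sigma$ are fixed), every admissible $d$ comes from a factorization $4\Sigma=ef$ with $e\le f$ via $dR=(e+f)/2$; there are at most $\tau(4\Sigma)$ such factorizations, where $\tau$ is the number-of-divisors function, and each determines at most one $d$. Thus each lower part contributes $O(\Sigma^{\epsilon})$ values of $d$.

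The crux is to control, for $d>\sqrt n$, both $\Sigma$ and the number of admissible lower parts, and both follow from the single bound $R=O(\sqrt n)$. To obtain it, note $dR=b_s+b_s'\le 2b_s'=2\Sigma/b_s$, so $R\le 2\Sigma/(d\,b_s)<(2/\sqrt n)(\Sigma/b_s)$. Since $b_s\ge 2$ and $b_j\le b_s$ for $j<s$, we get $\Sigma/b_s\le \tfrac n2+\sum_{j<s}b_j\le\tfrac n2+R$, using the elementary inequality $\sum_{j<s}b_j\le\prod_{j<s}b_j=R$ for integers $\ge 2$. Substituting gives $R<\sqrt n+2R/\sqrt n$, hence $R<n/(\sqrt n-2)=O(\sqrt n)$. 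Consequently $\Sigma=(n-s)+\sum_{j<s}b_j^2\le n+R^2=O(n)$, so $\tau(4\Sigma)=O(n^{\epsilon})$, and the admissible lower parts are multisets of integers $\ge 2$ with product $R=O(\sqrt n)$; by the standard estimate that the number of factorizations of an integer into factors $\ge 2$ is $n^{o(1)}$, their number is $O(n^{1/2+\epsilon})$. Multiplying the $O(n^{1/2+\epsilon})$ lower parts by the $O(n^{\epsilon})$ values of $d$ each, and adding the $O(n^{1/2})$ coming from $d\le\sqrt n$, yields $A(n)=O(n^{1/2+\epsilon})$.

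I expect the main obstacle to be exactly the uniform size control of the previous paragraph: a priori a fundamental solution may have $\Theta(\log n)$ nontrivial coordinates and a large reduced product $R$ (the all-twos solutions $(1^{\,n-s},2^{\,s})$ already reach $R\approx n/2$), and these are precisely the solutions with small $d$, where the divisor/multiplicative-partition counting would overcount wildly. The device that rescues the estimate is the dichotomy ``$d\le\sqrt n$ is counted trivially, while $d>\sqrt n$ forces $R=O(\sqrt n)$,'' so that the arithmetic counting is applied only in the range where $R$ is genuinely small. Establishing $R=O(\sqrt n)$ for $d>\sqrt n$ from $dR=b_s+b_s'$ and $b_s\le b_s'$ is the one genuinely non-formal step; the remaining inputs, $\tau(m)=O(m^{\epsilon})$ and the subpolynomial growth of the multiplicative-partition function, are standard.
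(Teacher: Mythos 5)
Two preliminary remarks. First, the paper contains no proof of this statement at all: it is quoted from Baragar \cite{Ba1}, so there is nothing internal to compare your argument with, and it must stand or fall on its own. Second, most of it does stand: the reduction of $A(n)$ to counting fundamental solutions, the Vieta/discriminant analysis (the partner root $b_s'=\Sigma/b_s$, each admissible $d$ recovered from a factorization of $4\Sigma$, hence $O(\tau(4\Sigma))=O(n^{\epsilon})$ values of $d$ per lower part), and above all the dichotomy $d\le\sqrt n$ versus $d>\sqrt n$ with the resulting bounds $R=O(\sqrt n)$ and $\Sigma=O(n)$ are all correct; I checked the chain $R<\tfrac{2}{\sqrt n}\bigl(\tfrac n2+R\bigr)$, which indeed uses only $b_s\le b_s'$, $b_s\ge2$, $b_j\le b_s$ and $\sum_{j<s}b_j\le\prod_{j<s}b_j$.

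The genuine gap is the ``standard estimate'' invoked in the final step. It is \emph{false} that the number $f(m)$ of unordered factorizations of $m$ into factors $\ge 2$ is $m^{o(1)}$: by the Canfield--Erd\H{o}s--Pomerance theorem on Oppenheim's ``factorisatio numerorum'' problem, the maximal order of $f$ is $m^{1-o(1)}$; concretely, for a primorial $m=p_1p_2\cdots p_j$ one has $f(m)=B_j$, the $j$-th Bell number, and $\log B_j\sim j\log j\sim\log m$. Consequently your counting scheme ``(number of admissible $R$) times $\max_R f(R)$'' only yields the useless bound $n^{1/2}\cdot n^{1/2-o(1)}=n^{1-o(1)}$. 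What your argument actually needs is the \emph{summatory} estimate: the number of multisets of integers $\ge 2$ with product at most $y$ is $\sum_{R\le y}f(R)=O_{\epsilon}(y^{1+\epsilon})$. This is true --- it is Oppenheim's classical average bound --- and it also follows in one line from Rankin's trick: for $\sigma=1+\epsilon$,
\begin{align*}
\sum_{R\le y}f(R)\;\le\; y^{\sigma}\sum_{R\ge1}\frac{f(R)}{R^{\sigma}}\;=\;y^{1+\epsilon}\prod_{k\ge2}\bigl(1-k^{-1-\epsilon}\bigr)^{-1}\;=\;C_{\epsilon}\,y^{1+\epsilon},
\end{align*}
the product converging because $\sum_{k\ge2}k^{-1-\epsilon}<\infty$. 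With this substitution your proof closes exactly as intended: $O(n^{1/2+\epsilon})$ lower parts, $O(n^{\epsilon})$ values of $d$ for each, plus the trivial $O(\sqrt n)$ coming from $d\le\sqrt n$. So the architecture of your argument is sound and the defect is local and repairable, but as written the key counting step rests on a false lemma.
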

Let $R$ be a finite field or ring and $d, b \in R$ with $d \ne 0$.

\begin{rem}
\begin{enumerate}
\item Let
\begin{align*}
L_R=\{ (x_1,x_2,x_3) | x_1^2+x_2^2+x_3^2-dx_1x_2x_3=b \} \subset R^3
\end{align*}
be not empty.\\
Let 
\begin{align*}
\varphi_R: (x_1,x_2,x_3) &\mapsto (x_2,x_1,x_3),\\
\omega_R: (x_1,x_2,x_3) &\mapsto (x_3,x_1,x_2),\\
\psi_R: (x_1,x_2,x_3) &\mapsto (dx_2x_3-x_1,x_2,x_3) \ \text{ and }\\
\rho_R: (x_1,x_2,x_3) &\mapsto (dx_2x_3-x_1,x_3,x_2)
\end{align*}
and
\begin{align*}
M_R = \ < \varphi_R, \omega_R, \psi_R >, \qquad M_R^0 = \ <\omega_R, \rho_R >.
\end{align*}
\begin{enumerate}
\item[(a)] Using this we may construct finite images of the modular group \newline $PSL(2,\mathbb Z)~\cong~M_3^0$ and the extended modular group\newline  $PGL(2,\mathbb Z) \cong M_3$.\\
We made several computational experiments and found some finite simple groups as some alternating groups $A_n$, $n \geq 5$, some projective linear groups $PSL(2,\mathbb  K)$, $\mathbb K$ a finite field, and the sporadic group $M_{12}$, the Mathieu group $M_{12}$. For a detailed discussion of finite simple groups which are images of the modular group see \cite{Wo1} and \cite{Wo2}. Similar computational experiments together with some explicit results for finite fields are given by Holt and Macbeath \cite{HM}.
\item[(b)] We also found some maximal automorphism groups of compact Klein surfaces with nonempty boundary and genus $g \geq 2$\newline ($M^*$~groups). These have order $12(g-1)$ and are finite images of the extended modular group $PGL(2,\mathbb Z)$.
\end{enumerate}
\item Goldman \cite{G} considers the whole group $\Gamma$ of automorphisms of the polynomial
\begin{align*}
k(x,y,z) = x^2+y^2+z^2 -xyz - 2.
\end{align*}
From above it is clear that - up to isomorphisms - the group $PGL(2,\mathbb Z)$ is a subgroup of $\Gamma$. The group $\Gamma$ is generated by the $PGL(2,\mathbb Z)$ and the mappings of order $2$ which replace two of the $x,y,z$ by their negative values.\\
For $t \in \mathbb R$, he describes in detail the $\Gamma$-action on $k^{-1}(t)\cap \mathbb R^3$. For parts of his results he uses the iterative procedure developed in the proof of the well known Lemma 2 of \cite{K-IR}.
He claims that the proof contains a gap near the end and gives a slightly different version of the iterative procedure. Goldman does not say where he believes to see a gap. We want to mention that the proof of Lemma 2 of \cite{K-IR} definitely does not contain a gap. Maybe Goldman overlooked the reference to the paper \cite{KR} for more details concerning the iterative procedure (see also \cite{R4} and especially \cite{FR} where we classified all generating pairs of all two generator Fuchsian groups) and the fact that Lemma 2 is trivial if $0 \leq x < 2$, which we see from the procedure and which is explained near the end of the proof of Lemma 2.
If we assume that the iterative procedure does not lead to the statement of Lemma 2, then for the limit elements $x_0, y_0, z_0$ we must have $ 0 \leq x_0 \leq y_0 \leq z_0 \leq \frac{1}{2}x_0y_0$. This gives $2\leq x_0$ from $z_0 \leq \frac{1}{2}x_0y_0$. If $x_0>2$ then from the inequalities and the calculated expressions for $z_0$ we would have $x_0^2(x_0-2) \leq y_0^2 (x_0 - 2) \leq x_0^2 - c < x_0^2 -4$ because $c > 4$, and then $x_0^2 < x_0 + 2$ which is impossible for $x_0 > 2$. Therefore $x_0 = 2$ and $y_0 = z_0$ as claimed near the end of the procedure, and this contradicts $c>4$.
\end{enumerate}
\end{rem}

Now we describe the announced $(n,t)$ secret sharing protocol based on Hurwitz equation \eqref{H}, which is
\begin{align*}
x_1^2+x_2^2+ \cdots + x_m^2 = x_1 \cdots x_m -k.
\end{align*}
We consider this equation over a field $\mathbb K$, for example $\mathbb K = \mathbb Q$ or a big finite field, with $k \ne 0$.

An $(n,t)$ secret sharing protocol, with $n,t \in \mathbb N$ and $t \leq n$, is a method to distribute a secret $S$ among a group of $n$ participants in such a way that it can be recovered if at least $t$ of them combine their shares.\\ 
The secret in this protocol is the element 
\begin{align*}
S:= x_1^2+x_2^2+ \cdots + x_m^2 - x_1 \cdots x_m.
\end{align*}
The shares for the participants are subsets from $\{x_1,x_2, \ldots, x_m\}$. To generate these shares we use the method from D. Panagopoulos (see \cite{P}):

\begin{enumerate}
\item It is  $m=\binom{n}{t-1}$ the number of  elements the participants need to know to reconstruct the secret, that is, they have to know the set $\{x_1,x_2,\ldots,x_m\}$.
\item Let $A_1,A_2,\ldots,A_m$ be an enumeration of the subsets of $\{1,2,\ldots,n\}$ with $t-1$ elements. Define $n$ subsets $R_1,R_2,\ldots,R_n$ of $\{x_1,x_2,\ldots,x_m\}$ with the property
$$
      x_j \in R_i  \qquad \Longleftrightarrow \qquad i \not\in A_j
$$
for $j=1, 2, \ldots, m$ and  $i=1, 2, \ldots, n$.
     
\item Each of the $n$ participants gets one of the sets $R_1,R_2,\ldots,R_n$.
\end{enumerate}
Each element $x_j$ is exactly contained in $n-(t-1)$ subsets. Hence for each\newline $j=1,2,\ldots,m$ the element $x_j$ is not contained in $t-1$ subsets from\newline $\{R_1,R_2,\ldots,R_n\}$. As a consequence, $x_j$ is in each union of $t$ subsets. On the other hand if just $t-1$ arbitrary sets from $\{R_1,R_2, \ldots, R_n\}$ are combined, there exist  a $j$ so that the element $x_j$ is not included in the union of this sets.

If just one element $x_j$ is absent the participants do not get the element $S$ and hence cannot compute the secret.

If $t$ of $n$ participants come together they get by construction the set $\{x_1,x_2,\ldots,x_m\}$ and hence they can calculate the secret
\begin{align*}
S= x_1^2+x_2^2+ \cdots + x_m^2 - x_1 \cdots x_m.
\end{align*}

\section{The generalized Hurwitz equation}\label{sec3}

In this section we consider the diophantine equation \eqref{GH}, which is
\begin{align*}
a_1x_1^2+\cdots + a_nx_n^2 = dx_1\cdots x_n - k
\end{align*}with $n\geq3$, $k \in \mathbb N \cup \{0\}$, $a_1, \ldots, a_n, d \in \mathbb N$, $a_i | d$ for $i=1,\ldots, n$ in \eqref{2} and $gcd(a_i,a_j)=1$ for $i \ne j$, for solvability in the integers.\\
As in section \ref{sec2} we may restrict ourselves to the case $k=0$ because 
\begin{align*}
k=a_{n+1}x^2_{n+1}+ \cdots +a_{n+k}x^2_{n+k} \quad  \text{ with } \\ a_{n+1}=\cdots = a_{n+k}=x_{n+1}=\cdots =x_{n+k}=1.
\end{align*}
Hence, from now on let $k=0$, that is,
\begin{align}\label{2}
a_1x_1^2+\cdots +a_nx_n^2 = dx_1\cdots x_n
\end{align}
with $n\geq3$, $a_1,\ldots,a_n, d \in \mathbb N$, $a_i | d$ for $i=1,\ldots, n$ and $gcd(a_i,a_j)=1$ for $i~\ne~j$.
Since $a_1,\ldots,a_n,d \in \mathbb N$ and $a_1x_1^2+\cdots +a_nx_n^2 \geq 0$ we may restrict ourselves to the solvability of \eqref{2} in natural numbers.
The assumption $gcd(a_i,a_j)=1$ for $i \ne j$ is not a restriction for $n=3$ because if, for instance, $t|a$ and $t|b$ then also $t|c$. But it is certainly a restriction for $n\geq 4$.\\
Again, let
\begin{align*}
L_n:=L_{a_1,\ldots,a_n,d,n}:=\{ (x_1,\ldots, x_n) \in \mathbb N^n | (x_1,\ldots,x_n) \text{ is a solution of \eqref{2}}\}.
\end{align*}
As in section \ref{sec2}, $L_n$ can be empty, and in what follows we assume $L_n \ne \emptyset$. Also we often write $\overline{x}$ for $(x_1,\ldots, x_n)$. The following maps are permutations from $L_n$ onto $L_n$:
\begin{align*}
\psi_i: (x_1,\ldots, x_{i-1},x_i,x_{i+1},\ldots, x_n) \mapsto (x_1,\ldots, x_{i-1},x'_i,x_{i+1},\ldots, x_n) 
\end{align*}
for $i=1,\ldots, n$ with 
\begin{align*}
x'_i = \frac{d}{a_i} \prod_{\substack{j=1\\ j\ne i}}^{n} x_j - x_i.
\end{align*}
We remark that $x'_i \in \mathbb N$ because $a_i | d$ for $i=1,\ldots,n$.\\
Let
\begin{align*}
M_n:=M_{a_1,\ldots,a_n,d,n}= \ < \psi_1,\ldots,\psi_n>
\end{align*}
be the permutation group generated by $\psi_1,\ldots,\psi_n$.

\begin{thm}
If $L_n \ne \emptyset$ then $M_n$ can be described by the generators $\psi_1,\ldots,\psi_n$ and the following defining relations
\begin{align*}
\psi_1^2=\psi_2^2=\cdots=\psi_n^2=1,
\end{align*}
that is, $M_n$ is the free product of $n$ cyclic groups of order $2$.
\end{thm}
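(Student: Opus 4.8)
The plan is to exhibit the claimed presentation by showing that the natural map from the abstract free product onto $M_n$ is an isomorphism. Since each $\psi_i$ is at once seen to be an involution --- applying $\psi_i$ twice returns the $i$-th coordinate to $\frac{d}{a_i}\prod_{j\ne i}x_j - x_i' = x_i$ while leaving the others fixed --- the assignment sending the $i$-th free generator to $\psi_i$ induces a surjection $\pi\colon C_2 * \cdots * C_2 \twoheadrightarrow M_n$. It therefore suffices to prove $\pi$ is injective, i.e.\ that no nontrivial reduced word $w = \psi_{i_m}\cdots\psi_{i_1}$ (with $i_{t+1}\ne i_t$ and $m\ge 1$) acts as the identity permutation of $L_n$. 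To track the generators I would introduce the size $\sigma(\overline x) = \sum_{j} a_j x_j^2 = d\,x_1\cdots x_n$ afforded by \eqref{2}. Because $\psi_i$ alters only the $i$-th coordinate and the two roots of the underlying quadratic satisfy $x_i + x_i' = \frac{d}{a_i}\prod_{j\ne i}x_j$, a short computation gives $\sigma(\psi_i\overline x) - \sigma(\overline x) = a_i(x_i'^2 - x_i^2)$, whose sign is that of $x_i' - x_i$; hence $\psi_i$ strictly increases $\sigma$ exactly when $2a_i x_i^2 < \sigma(\overline x)$, strictly decreases it when $2a_i x_i^2 > \sigma(\overline x)$, and fixes $\overline x$ when equality holds.

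The crux --- and the step I expect to be the main obstacle --- is the following uniqueness statement: for every $\overline x \in L_n$ at most one index $i$ satisfies $2a_i x_i^2 \ge \sigma(\overline x)$. I would prove it by contradiction: if two distinct indices $i\ne j$ both satisfied this, adding the inequalities would give $2a_ix_i^2 + 2a_jx_j^2 \ge 2\sigma(\overline x) \ge 2(a_ix_i^2 + a_jx_j^2) + 2\sum_{k\ne i,j}a_kx_k^2$, forcing $\sum_{k\ne i,j}a_kx_k^2 \le 0$. Since $n\ge 3$ this is a nonempty sum of strictly positive terms, a contradiction. This is precisely where the hypothesis $n\ge 3$ is used, and it says that from any solution at most one of the $\psi_i$ is non-increasing (``points back toward the root''); all others strictly enlarge $\sigma$.

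With this in hand the remainder is an ascending-orbit argument, and since I need only find \emph{one} solution moved by $w$ I am free to choose the starting point. First I would arrange that the initial letter $\psi_{i_1}$ ascends strictly somewhere: take $\overline y$ of minimal size (the least element of the nonempty set $\{\sigma(\overline x): \overline x \in L_n\}\subseteq\mathbb N$), so every $\psi_i$ is non-decreasing at $\overline y$; by the uniqueness statement at most one of them fixes $\overline y$, and at least $n-1\ge 2$ strictly ascend. If $\psi_{i_1}$ is the exceptional fixed one, apply a strictly ascending $\psi_k$ with $k\ne i_1$: the $i_1$-coordinate is unchanged while $\sigma$ grows, so $\psi_{i_1}$ becomes strictly ascending at the new solution $\overline x_0$. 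Starting from such an $\overline x_0$ I would then show by induction that every letter of $w$ acts by a strict ascent: once a step $\overline x_t = \psi_{i_t}\overline x_{t-1}$ has strictly increased $\sigma$, the move $\psi_{i_t}$ is strictly \emph{descending} at $\overline x_t$ (it returns to the smaller $\overline x_{t-1}$), so the uniqueness statement forces every index other than $i_t$ --- in particular $i_{t+1}$, since $i_{t+1}\ne i_t$ --- to be strictly ascending at $\overline x_t$. Consequently $\sigma(w\,\overline x_0) > \sigma(\overline x_0)$, so $w\,\overline x_0 \ne \overline x_0$ and $w$ is not the identity. This proves $\pi$ injective and identifies $M_n$ with the free product of $n$ copies of $C_2$. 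A ping--pong argument is a possible alternative, but it would require manufacturing $n$ pairwise disjoint regions explicitly, whereas the monotonicity argument above needs only the single function $\sigma$ together with the uniqueness lemma.
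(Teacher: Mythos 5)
Your proof is correct, and it is genuinely more self-contained than the paper's. The paper disposes of this theorem in a few lines: after observing $\psi_i^2=1$, it substitutes $y_i=\sqrt{a_i}\,x_i$, which converts \eqref{2} into the Hurwitz-type equation $y_1^2+\cdots+y_n^2=\frac{d}{\sqrt{a_1\cdots a_n}}\,y_1\cdots y_n$ with a positive real coefficient, and then invokes Satz 1 of \cite{R1} (which treats $y_1^2+\cdots+y_n^2=ay_1\cdots y_n$, $a\in\mathbb R$, $a>0$, with real solutions) to conclude that $\psi_{r_1}\cdots\psi_{r_m}\neq 1$ for every reduced word; the combinatorial core is thus outsourced to that reference. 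You instead prove that core directly: your size function $\sigma(\overline x)=\sum_j a_jx_j^2=dx_1\cdots x_n$, the sign computation for $\sigma(\psi_i\overline x)-\sigma(\overline x)$, and your uniqueness lemma (two indices $i\neq j$ with $2a_ix_i^2\ge\sigma(\overline x)$ and $2a_jx_j^2\ge\sigma(\overline x)$ would force $\sum_{k\ne i,j}a_kx_k^2\le 0$, impossible for $n\ge 3$) give an ascent argument along any reduced word, and you correctly handle the one delicate point, namely that the initial letter could fix a solution of minimal size, by first moving off the minimum with a different generator. This is essentially the same mechanism the paper itself deploys later in the proof of Theorem \ref{T5} (there with the height $h(\overline x)=x_1+\cdots+x_n$ in place of $\sigma$, and the same ``two non-increasing directions'' contradiction $a_1x_{1_s}^2+a_2x_{2_s}^2\ge dx_{1_s}\cdots x_{n_s}$), so your argument fits naturally into the paper's toolkit. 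What the paper's route buys is brevity and the extra information that freeness has nothing to do with integrality, since it already holds on the real solution set; what your route buys is a complete proof that spares the reader a trip to \cite{R1}, at the cost of roughly a page.
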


\begin{proof}
Certainly $\psi_i^2=1$ for $i=1,\ldots,n$. We write $y_i=\sqrt{a_i}x_i \in \mathbb R$ if $(x_1,\ldots,x_n)\in L_n$. Then we get the equation 
\begin{align*}
y_1^2+\cdots +y_n^2 = \frac{d}{\sqrt{a_1\cdots a_n}}  y_1\cdots y_n.
\end{align*}
Now we are exactly in the situation of Satz 1 in \cite{R1} where we considered the equation
\begin{align}\label{3}
y_1^2+\cdots +y_n^2 = a y_1\cdots y_n
\end{align}
with $a \in \mathbb R$, $a>0$, and worked with real solutions $(y_1,\ldots,y_n)\in \mathbb R^n$.\\
Hence, we get $\psi_{r_1}\cdots \psi_{r_m} \ne 1$ if $r_i \ne r_{i+1}$ for all $i=1,\ldots,m-1$.
This gives the result.
\end{proof}

\begin{defn}
Let $\overline{x}=(x_1,\ldots,x_n) \in \mathbb N^n$ be a solution of \eqref{2}.
\begin{enumerate}
\item $h(\overline{x})=x_1+\cdots+x_n$ is called the height of $\overline{x}$.
\item The solutions $\psi_i(\overline{x})$ are called the neighbors of $\overline{x}$.
\end{enumerate}
\end{defn}

This means, that $\overline{x}$ has exactly the $n$ neighbors  $\psi_1(\overline{x}),\ldots,\psi_n(\overline{x})$.

\begin{defn}
A solution $\overline{x}=(x_1,\ldots,x_n)\in L_n$ is called a fundamental solution of \eqref{2} if 
\begin{align*}
h(\psi_i(\overline{x})) \geq h(\overline{x}) \qquad \text{ for all } i=1, \ldots, n.
\end{align*}
\end{defn}

\begin{lem}\label{L3}
The $n$-tuple $\overline{x}=(x_1,\ldots,x_n)\in L_n$ is a fundamental solution of \eqref{2} if and only if 
\begin{align*}
2a_ix_i \leq d \prod_{\substack{j=1\\ j\ne i}}^{n} x_j \qquad \text{ for all } i=1,\ldots, n.
\end{align*}
\end{lem}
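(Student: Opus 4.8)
The plan is to translate the defining inequality of a fundamental solution directly into the claimed bound through a single height computation, handling each index $i$ separately.

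First I would observe that the map $\psi_i$ modifies only the $i$-th entry of $\overline{x}$: it replaces $x_i$ by $x_i' = \frac{d}{a_i}\prod_{j\ne i} x_j - x_i$ and leaves every other coordinate unchanged. Consequently the heights $h(\psi_i(\overline{x}))$ and $h(\overline{x})$ agree in all but the $i$-th summand, so their difference collapses to
\begin{align*}
h(\psi_i(\overline{x})) - h(\overline{x}) = x_i' - x_i = \frac{d}{a_i}\prod_{\substack{j=1\\ j\ne i}}^{n} x_j - 2x_i.
\end{align*}

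Next I would read off the equivalence. By definition $\overline{x}$ is fundamental precisely when $h(\psi_i(\overline{x})) \geq h(\overline{x})$ for every $i$, which by the displayed identity is the same as $\frac{d}{a_i}\prod_{j\ne i} x_j - 2x_i \geq 0$ for every $i$. Since $a_i \in \mathbb N$ is positive, I may clear the denominator by multiplying through by $a_i$, and this manipulation is reversible; thus the inequality is equivalent to $d\prod_{j\ne i} x_j \geq 2a_i x_i$ for every $i$, which is exactly the assertion.

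There is essentially no genuine obstacle here: the whole argument rests on the observation that $\psi_i$ is a local modification affecting only one coordinate, so that the height difference reduces to $x_i' - x_i$, followed by an elementary rearrangement. The only point meriting a word of care is that the equivalence holds index by index, so the quantifier ``for all $i$'' passes through unchanged from the height formulation to the product formulation in both directions.
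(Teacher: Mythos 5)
Your proposal is correct and follows essentially the same route as the paper: both arguments reduce the height condition $h(\psi_i(\overline{x})) \geq h(\overline{x})$ to the single-coordinate inequality $x_i \leq \frac{d}{a_i}\prod_{j\ne i} x_j - x_i$ and then rearrange, index by index, to obtain $2a_ix_i \leq d\prod_{j\ne i} x_j$. Your explicit computation of the height difference $h(\psi_i(\overline{x})) - h(\overline{x}) = x_i' - x_i$ merely makes transparent a step the paper leaves implicit; the substance is identical.
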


\begin{proof}
If $\overline{x}$ is a fundamental solution of \eqref{2} then
\begin{align*}
x_i \leq \frac{d}{a_i} \prod_{\substack{j=1\\ j\ne i}}^{n} x_j - x_i \qquad \text{ for all } i=1,\ldots, n
\end{align*}
which is equivalent to 
\begin{align*}
2a_ix_i \leq d \prod_{\substack{j=1\\ j\ne i}}^{n} x_j \qquad \text{ for all } i=1,\ldots, n.
\end{align*}
If these inequalities hold then $h(\overline{x}) \leq h(\psi_i(\overline{x}))$ for all $i=1,\ldots, n$, that is, $\overline{x}$ is a fundamental solution of \eqref{2}.
\end{proof}

\begin{thm}\label{T5}
\begin{enumerate}
\item[(1)] If $\overline{x} \in L_n$ then there exists a $\gamma \in M_n$ with $\gamma(\overline{x})$ is a fundamental solution.
\item[(2)] If $\overline{x},\overline{y} \in L_n$ are two different fundamental solutions of \eqref{2} then there is no $\gamma \in M_n$ with $\gamma(\overline{x})=\overline{y}$.
\end{enumerate}
\end{thm}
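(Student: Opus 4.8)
The plan is to treat both parts via a single principle: $\psi_i$ changes only the $i$-th coordinate, replacing $x_i$ by $x_i'=\frac{d}{a_i}\prod_{j\ne i}x_j-x_i$, so that $h(\psi_i(\overline x))-h(\overline x)=x_i'-x_i=\frac{d}{a_i}\prod_{j\ne i}x_j-2x_i$. Hence $h(\psi_i(\overline x))$ is larger than, equal to, or smaller than $h(\overline x)$ according to whether $2a_ix_i$ is smaller than, equal to, or larger than $d\prod_{j\ne i}x_j$. In particular, by Lemma~\ref{L3}, $\overline x$ is fundamental precisely when no generator $\psi_i$ strictly lowers the height.

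For part (1) I would argue by descent. If $\overline x\in L_n$ is not fundamental, then by the above some $\psi_i$ strictly decreases $h$; apply it and repeat. Since every coordinate of a solution lies in $\mathbb N$, the successive heights form a strictly decreasing sequence of positive integers, which must terminate after finitely many steps at a tuple all of whose neighbors have height at least its own, i.e.\ at a fundamental solution. The composite of the generators used is the desired $\gamma\in M_n$.

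The crucial input for part (2), and the main obstacle, is a strict monotonicity lemma. Passing to $y_i=\sqrt{a_i}\,x_i$ (as in the proof that $M_n$ is a free product) turns \eqref{2} into $\sum_j y_j^2=a\,y_1\cdots y_n$ with $a=d/\sqrt{a_1\cdots a_n}$, and a short computation shows that $2a_ix_i$ compares with $d\prod_{j\ne i}x_j$ exactly as $y_i^2$ compares with $\sum_{j\ne i}y_j^2$. Two consequences follow. First, at most one index $i$ can satisfy $y_i^2>\sum_{j\ne i}y_j^2$, since two such indices $i\ne k$ would yield the contradiction $y_i^2>y_k^2$ and $y_k^2>y_i^2$; thus a non-fundamental solution has a unique descent direction. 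Second, and this is where $n\geq3$ is essential, if $\overline w=\psi_i(\overline v)$ satisfies $h(\overline w)\geq h(\overline v)$ (equivalently $y_i^2\geq\sum_{l\ne i}y_l^2$ at $\overline w$), then for every $j\ne i$ one has $h(\psi_j(\overline w))>h(\overline w)$ strictly: assuming instead $y_j^2\geq\sum_{l\ne j}y_l^2$ forces $y_i^2=y_j^2$ and $y_l=0$ for all $l\ne i,j$, impossible since each $y_l=\sqrt{a_l}x_l>0$ and $n\geq3$ guarantees such an $l$ exists. I expect the careful bookkeeping of this strict-increase lemma to be the delicate part, as it isolates the role of $n\geq3$ and drives the whole tree structure.

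With this lemma in hand, part (2) follows by tracking heights along a geodesic. Suppose $\gamma\in M_n$ satisfies $\gamma(\overline x)=\overline y$ with $\overline x,\overline y$ both fundamental; I will show $\overline x=\overline y$, the contrapositive of the assertion. Using that $M_n$ is the free product of the $\langle\psi_i\rangle$, write $\gamma=\psi_{i_m}\cdots\psi_{i_1}$ in reduced form ($i_{k+1}\ne i_k$) and set $\overline x_0=\overline x$, $\overline x_k=\psi_{i_k}(\overline x_{k-1})$, $\overline x_m=\overline y$. Since $\overline x$ is fundamental, $h(\overline x_1)\geq h(\overline x_0)$; the reducedness condition lets me apply the monotonicity lemma at each successive step, giving $h(\overline x_1)<h(\overline x_2)<\cdots<h(\overline x_m)$ whenever $m\geq2$. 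But then $\overline x_{m-1}=\psi_{i_m}(\overline y)$ is a neighbor of $\overline y$ of strictly smaller height, contradicting the fundamentality of $\overline y$; hence $m\leq1$. If $m=1$, fundamentality of $\overline x$ gives $h(\overline y)\geq h(\overline x)$, while fundamentality of $\overline y$ applied to $\psi_{i_1}(\overline y)=\overline x$ gives $h(\overline x)\geq h(\overline y)$, so $h(\overline x)=h(\overline y)$; since $\overline x$ and $\overline y$ differ only in the $i_1$-th coordinate, equal heights force $\overline x=\overline y$. This establishes the claim.
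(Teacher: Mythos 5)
Your proof is correct and follows essentially the same route as the paper: part (1) by integer descent on the height, and part (2) by a height argument along the path joining the two fundamental solutions, whose crux --- that two distinct generators cannot both fail to increase the height at a positive solution when $n\geq 3$ --- is exactly the paper's central computation (the paper sums the two non-increase inequalities to get $a_1x_{1_s}^2+a_2x_{2_s}^2\geq dx_{1_s}\cdots x_{n_s}=a_1x_{1_s}^2+\cdots+a_nx_{n_s}^2$, impossible, which is the same contradiction as your $y_i^2=y_j^2$ and $y_l=0$ for $l\neq i,j$). The only difference is bookkeeping: the paper locates the contradiction at a maximum-height interior vertex of a path of pairwise distinct solutions, while you propagate strict height increase along a reduced word; these are equivalent formulations of the same idea.
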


\begin{proof}
(1) is obviously because $\overline{x} \in \mathbb N^n$.\\
 We now prove (2). Let $\overline{x},\overline{y} \in L_n$ be two different fundamental solutions of \eqref{2}. Assume there exists a $\gamma \in M_n$ with $\gamma(\overline{x})=\overline{y}$, that is, there is a finite sequence $\overline{x}_0,\overline{x}_1,\ldots,\overline{x}_p$ with $\overline{x}_0=\overline{x}$, $\overline{x}_p=\overline{y}$ and $\overline{x}_0,\overline{x}_1,\ldots,\overline{x}_p$ are pairwise different and $\overline{x}_i$ is a neighbor of $\overline{x}_{i+1}$ for $i=0,1,\ldots,p-1$.
We have $p \geq 2$ because $\overline{x}$ and $\overline{y}$ are  different and the $\overline{x}_i$ are pairwise different. 
Especially $h(\overline{x}_i)\ne h(\overline{x}_{i+1})$ for $i=0,1,\ldots,p-1$, because otherwise $\overline{x}_i = \overline{x}_{i+1}$.
Also $h(\overline{x}_1) > h(\overline{x}_0)$, $h(\overline{x}_{p-1}) > h(\overline{x}_p)$ because $\overline{x}_0$ and $\overline{x}_p$ are a fundamental solution of \eqref{2}. Let
\begin{align*}
h^* = \underset{0 \leq i \leq p}{max} h(\overline{x}_i) = h(\overline{x}_s)
\end{align*}
for $ 0 \leq s \leq p$.
We remark that $s \ne 0$ and $s\ne p$, so $0<s<p$.
The solutions $\overline{x}_{s-1}$ and $\overline{x}_{s+1}$ are neighbors of $\overline{x}_s$. Without loss of generality, let $\overline{x}_{s-1}= \psi_1(\overline{x}_s)$, $\overline{x}_{s+1}= \psi_2(\overline{x}_s)$, that is
\begin{align*}
\overline{x}_{s-1} &= (\frac{d}{a_1}x_{2_s}\cdots x_{n_s} - x_{1_s},x_{2_s},\ldots, x_{n_s}),\\
\overline{x}_{s+1} &= (x_{1_s},\frac{d}{a_2}x_{1_s}x_{3_s}\cdots x_{n_s} - x_{2_s},x_{3_s},\ldots, x_{n_s})
\end{align*}
when we write $\overline{x}_s=(x_{1_s},\ldots, x_{n_s})$.\\
From 
\begin{align*}
h(\overline{x}_s) \geq h(\overline{x}_{s-1})\quad \text{ and } \quad h(\overline{x}_s) \geq h(\overline{x}_{s+1})
\end{align*}
 we get
\begin{align*}
x_{1_s} \geq \frac{d}{a_1}x_{2_s}\cdots x_{n_s} - x_{1_s} \quad \text{ and } \quad x_{2_s} \geq \frac{d}{a_2}x_{1_s}x_{3_s}\cdots x_{n_s} - x_{2_s}
\end{align*}
which is equivalent to
\begin{align*} 
2a_1x_{1_s}^2 \geq dx_{1_s}x_{2_s}\cdots x_{n_s} \quad \text{ and } \quad 2a_2x_{2_s}^2 \geq dx_{1_s}x_{2_s}\cdots x_{n_s}.
\end{align*}
Hence we get 
\begin{align*}
a_1x_{1_s}^2+a_2x_{2_s}^2 \geq dx_{1_s}\cdots x_{n_s} = a_1x_{1_s}^2+\cdots +a_nx_{n_s}^2
\end{align*}
which is impossible because $n \geq 3$ and $x_{1_s},\ldots, x_{n_s} >0$.\\
This proves Theorem \ref{T5}.
\end{proof}

\begin{rem}
We call two element $\overline{x},\overline{y} \in L_n$ \textbf{equivalent} if there is a $\gamma \in M_n$ with $\overline{x}=\gamma(\overline{y})$.\\
Again, let $F_n$ be the set of the fundamental solutions of \eqref{2}. Theorem \ref{T5} then means that $M_n$ operates discontinuously on $L_n$, and $F_n$ is a fundamental domain for this operation.\\
Also, we get all solutions of \eqref{2} if we know all fundamental solutions of \eqref{2}.
\end{rem}

\begin{lem}\label{L4}
Let $\overline{x}=(x_1,\ldots,x_n) \in F_n$ be a fundamental solution of \eqref{2} with $1 \leq x_1 \leq x_2 \leq \cdots \leq x_n$. Then
 $$dx_1\cdots x_{n-2} \leq a_1 + a_2 + \cdots + a_n$$ 
and equality holds if and only if $x_1=x_2=\cdots=x_n$.
\end{lem}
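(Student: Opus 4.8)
The plan is to reduce the claimed inequality to an equivalent polynomial inequality and then use the fundamental-solution condition of Lemma \ref{L3} to control the single ``bad'' term. Since $\overline{x}$ solves \eqref{2}, we have $d\,x_1\cdots x_n = \sum_{i=1}^n a_i x_i^2$, so dividing by $x_{n-1}x_n$ gives
\[
d\,x_1\cdots x_{n-2} = \frac{a_1x_1^2+\cdots+a_nx_n^2}{x_{n-1}x_n}.
\]
Hence the assertion $d\,x_1\cdots x_{n-2}\le a_1+\cdots+a_n$ is equivalent to
\[
\Delta := \Big(\sum_{i=1}^n a_i\Big)x_{n-1}x_n - \sum_{i=1}^n a_i x_i^2 \ \ge\ 0,
\]
and I will prove $\Delta\ge 0$, with equality exactly when all the $x_i$ coincide.

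Next I would write $\Delta = \sum_{i=1}^{n-1}a_i(x_{n-1}x_n-x_i^2) + a_n(x_{n-1}x_n-x_n^2)$. Because $x_1\le\cdots\le x_n$, every summand with $i\le n-1$ is nonnegative (indeed $x_i^2\le x_{n-1}^2\le x_{n-1}x_n$), whereas the single term $i=n$ equals $-a_nx_n(x_n-x_{n-1})\le 0$. So the whole difficulty is to show that the nonnegative terms outweigh this one negative term.

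The key estimate, and the step I expect to be the main obstacle, is the bound
\[
a_nx_n \ \le\ x_{n-1}\sum_{i=1}^{n-1}a_i \qquad (\star).
\]
To obtain it I would take the fundamental-solution inequality of Lemma \ref{L3} for $i=n$, namely $2a_nx_n\le d\,x_1\cdots x_{n-1}$, multiply by $x_n$ and use \eqref{2} to get $2a_nx_n^2\le\sum_{i=1}^n a_ix_i^2$, whence $a_nx_n^2\le\sum_{i=1}^{n-1}a_ix_i^2\le x_{n-1}^2\sum_{i=1}^{n-1}a_i$; dividing by $x_n$ and using $x_{n-1}\le x_n$ yields $(\star)$. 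The subtlety is that a cruder use of the fundamental condition only gives $\Delta\ge 0$ up to a spurious factor of $2$: one really has to play the surplus coming from $x_i\le x_{n-1}$ at the small indices against the deficit of the largest coordinate. Concretely, $x_i\le x_{n-1}$ gives $x_{n-1}x_n-x_i^2\ge x_{n-1}(x_n-x_{n-1})$ for $i\le n-1$, so $\sum_{i=1}^{n-1}a_i(x_{n-1}x_n-x_i^2)\ge x_{n-1}(x_n-x_{n-1})\sum_{i=1}^{n-1}a_i$, while $(\star)$ gives $a_nx_n(x_n-x_{n-1})\le x_{n-1}(x_n-x_{n-1})\sum_{i=1}^{n-1}a_i$. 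Subtracting these yields $\Delta\ge 0$.

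Finally, for the equality statement the reverse implication is immediate: if $x_1=\cdots=x_n=x$ then \eqref{2} gives $\sum_i a_i = d\,x^{n-2}=d\,x_1\cdots x_{n-2}$. For the forward implication, $\Delta=0$ forces both inequalities above to be equalities. If $x_n=x_{n-1}$, then $\Delta=\sum_{i=1}^{n-1}a_i(x_{n-1}^2-x_i^2)=0$ forces $x_1=\cdots=x_{n-1}=x_n$. If instead $x_n>x_{n-1}$, equality in the first estimate already forces $x_1=\cdots=x_{n-1}$, but then the derivation of $(\star)$ sharpens to $a_nx_n\le (x_{n-1}^2/x_n)\sum_{i<n}a_i < x_{n-1}\sum_{i<n}a_i$, contradicting equality in $(\star)$; so this case cannot occur. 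Hence equality holds precisely when all the $x_i$ are equal.
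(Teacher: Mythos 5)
Your proof is correct, and it takes a genuinely different route from the paper's, even though it consumes the same two ingredients (Lemma \ref{L3} at the index $n$ and the ordering $x_1\le\cdots\le x_n$). The paper argues via one-variable monotonicity: it sets $f(t)=a_1x_1^2+\cdots+a_{n-1}x_{n-1}^2+a_nt^2-dx_1\cdots x_{n-1}t$, notes that $f(x_n)=0$ is exactly \eqref{2} and that $f$ decreases on $\left[0,\tfrac{d}{2a_n}x_1\cdots x_{n-1}\right]$, an interval containing both $x_{n-1}$ and $x_n$ by Lemma \ref{L3}; hence $f(x_{n-1})\ge f(x_n)=0$, and replacing each $x_i$ ($i\le n-1$) by $x_{n-1}$ and dividing by $x_{n-1}^2$ gives the claim, the equality case falling out almost automatically from strict monotonicity of $f$ below its vertex (which forces $x_{n-1}=x_n$) together with $\sum_{i<n}a_i(x_{n-1}^2-x_i^2)=0$. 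You instead normalize to the symmetric inequality $\Delta\ge 0$ and prove it by balancing the surplus of the small coordinates against the deficit of the largest one, with the key estimate $(\star)$: $a_nx_n\le x_{n-1}\sum_{i<n}a_i$. A structural difference is that you must invoke \eqref{2} a second time inside the derivation of $(\star)$ (to pass from $2a_nx_n\le dx_1\cdots x_{n-1}$ to $2a_nx_n^2\le\sum_i a_ix_i^2$), whereas the paper uses the equation only once, as $f(x_n)=0$. What each buys: the paper's version is shorter and conceptually transparent; yours is purely algebraic (no appeal to calculus or monotonicity of a quadratic), makes explicit the ``factor of 2'' subtlety that a naive application of Lemma \ref{L3} leaves behind, and treats the equality case — including the strict sharpening of $(\star)$ when $x_n>x_{n-1}$ — considerably more carefully than the paper's terse ``by the above inequalities''.
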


\begin{proof}
Since $\overline{x} \in F_n$ we have 
\begin{align*}0 < x_1 \leq \cdots \leq x_n \leq \frac{d}{2a_n}x_1\cdots x_{n-1}.
\end{align*}
The function
\begin{align*}
f(t) = a_1x_1^2 + \cdots +a_{n-1}x_{n-1}^2+a_nt^2 - dx_1\cdots x_{n-1}t
\end{align*}
decrease monotonly in $\lbrack 0, \frac{d}{2a_n}x_1\cdots x_{n-1} \rbrack $ because of
\begin{align*}
f'(t)= 2a_nt - dx_1\cdots x_{n-1} \leq 0 \quad \text{ if } \quad t \leq \frac{d}{2a_n}x_1\cdots x_{n-1},
\end{align*}
that is especially 
\begin{align*}
f(x_{n-1}) \geq f(x_n) = 0.
\end{align*}
Hence, 
\begin{align*}
&(a_1+\cdots + a_{n-1})x_{n-1}^2+a_nx_{n-1}^2 - dx_1\cdots x_{n-2}x_{n-1}^2 \geq\\
&\geq a_1x_1^2 + \cdots + a_{n-1}x_{n-1}^2+a_nx_{n-1}^2 - dx_1\cdots x_{n-2}x_{n-1}^2 = f(x_{n-1}) \geq 0.
\end{align*}
Therefore,
$$
dx_1 \cdots x_{n-2} \leq a_1 + a_2 + \dots +a_n,
$$
and equality holds if and only if $x_1=\cdots= x_n$ by the above inequalities.
\end{proof}

\begin{cor}\label{C2}
If $d > a_1 + \cdots + a_n$ then $L_n = \emptyset$.
\end{cor}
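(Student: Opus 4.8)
The plan is to argue by contradiction, with Lemma \ref{L4} as the single engine of the proof. Suppose $d > a_1 + \cdots + a_n$ but $L_n \neq \emptyset$. By Theorem \ref{T5}(1) every element of $L_n$ is carried by some $\gamma \in M_n$ to a fundamental solution, so $F_n \neq \emptyset$; fix $\overline{x} = (x_1,\ldots,x_n) \in F_n$. The goal is to feed $\overline{x}$ into Lemma \ref{L4}, read off the bound $d\,x_1\cdots x_{n-2} \leq a_1 + \cdots + a_n$, and derive a clash with the hypothesis on $d$.

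The one point that needs care is that Lemma \ref{L4} is stated for a fundamental solution whose components are already sorted, $1 \leq x_1 \leq \cdots \leq x_n$, whereas a generic fundamental solution need not be. First I would observe that sorting is harmless: for any permutation $\sigma$ of $\{1,\ldots,n\}$, writing $y_i = x_{\sigma(i)}$ one has $\sum_i a_{\sigma(i)} y_i^2 = \sum_j a_j x_j^2 = d\prod_j x_j = d\prod_i y_i$, so $(y_1,\ldots,y_n)$ solves a generalized Hurwitz equation with the same $d$ and the coefficients $a_i$ merely permuted. The hypotheses $a_{\sigma(i)} \mid d$ and $\gcd(a_{\sigma(i)},a_{\sigma(j)})=1$ are permutation invariant, the sum $a_1 + \cdots + a_n$ is unchanged, and by the criterion of Lemma \ref{L3} the permuted tuple is again a fundamental solution. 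Choosing $\sigma$ so that the components are nondecreasing, I may therefore assume $1 \leq x_1 \leq \cdots \leq x_n$ from the outset.

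Applying Lemma \ref{L4} then gives $d\,x_1\cdots x_{n-2} \leq a_1 + \cdots + a_n$. Since $\overline{x} \in \mathbb{N}^n$, each $x_i \geq 1$, and the product $x_1\cdots x_{n-2}$ (nonempty because $n \geq 3$) is at least $1$, whence $d \leq d\,x_1\cdots x_{n-2} \leq a_1 + \cdots + a_n$. This contradicts $d > a_1 + \cdots + a_n$, forcing $L_n = \emptyset$. I do not expect any genuine obstacle: the mathematical content lives entirely in Lemma \ref{L4}, and the only mild care needed is the permutation-invariance check above together with the trivial lower bound $x_1\cdots x_{n-2} \geq 1$.
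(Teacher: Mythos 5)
Your proof is correct and takes essentially the same route as the paper, where Corollary \ref{C2} is read off directly from Lemma \ref{L4} applied to a fundamental solution (which exists by Theorem \ref{T5}(1) whenever $L_n \neq \emptyset$), together with the trivial bound $x_1 \cdots x_{n-2} \geq 1$. Your permutation-invariance check for sorting the components is sound and is the same device the paper itself uses in the proof of Lemma \ref{L5}, so it is a legitimate (and welcome) filling-in of a detail the paper leaves implicit.
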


\begin{cor}\label{C3}
If $L_n \ne \emptyset$ then $d \leq a_1 + \cdots + a_n$ and if $d=a_1+\cdots + a_n$ then $x_1= \cdots = x_n=1$.
\end{cor}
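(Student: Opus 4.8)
The plan is to derive both assertions directly from Lemma~\ref{L4} (and, for the first one, from Corollary~\ref{C2}), so that essentially no new computation is required.

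For the first assertion I would simply take the contrapositive of Corollary~\ref{C2}. That corollary states $d > a_1 + \cdots + a_n \implies L_n = \emptyset$; negating both sides gives $L_n \ne \emptyset \implies d \le a_1 + \cdots + a_n$, which is exactly the claimed bound. (For completeness one can recall why Corollary~\ref{C2} holds: if $L_n \ne \emptyset$ then by Theorem~\ref{T5}(1) it contains a fundamental solution, to which Lemma~\ref{L4} applies, giving $d \le d\,x_1 \cdots x_{n-2} \le a_1 + \cdots + a_n$, where $x_1 \cdots x_{n-2} \ge 1$ because every $x_i \in \mathbb{N}$.)

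For the equality case, suppose $d = a_1 + \cdots + a_n$ and choose a fundamental solution $\overline{x} \in F_n$. After relabelling the indices (which permutes the $x_i$ and the $a_i$ simultaneously and leaves the symmetric sum $a_1 + \cdots + a_n$ unchanged) I may assume $1 \le x_1 \le \cdots \le x_n$. Lemma~\ref{L4} then yields
\[
d\, x_1 \cdots x_{n-2} \;\le\; a_1 + \cdots + a_n \;=\; d .
\]
Since each $x_i \ge 1$ we have $x_1 \cdots x_{n-2} \ge 1$, so this forces $x_1 \cdots x_{n-2} = 1$ and hence $x_1 = \cdots = x_{n-2} = 1$. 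Moreover equality now holds throughout, so the equality clause of Lemma~\ref{L4} gives $x_1 = \cdots = x_n$; combined with $x_1 = 1$ this yields $x_1 = \cdots = x_n = 1$, as required.

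The argument is bookkeeping on top of Lemma~\ref{L4}, so I do not anticipate a genuine obstacle. The only point needing a word of care is the reduction to a sorted fundamental solution: one must check that Lemma~\ref{L4} may legitimately be applied after relabelling, which is fine precisely because its right-hand side $a_1 + \cdots + a_n$ is invariant under permuting the indices, while its left-hand side is $d$ times the product of all but the two largest coordinates. Everything else is forced by the integrality $x_i \in \mathbb{N}$ together with the equality characterization already supplied by Lemma~\ref{L4}.
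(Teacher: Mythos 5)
Your proof is correct and follows essentially the route the paper intends: Corollary~\ref{C3} is stated there without proof as an immediate consequence of Lemma~\ref{L4} (via Corollary~\ref{C2} for the inequality, and via the equality clause of Lemma~\ref{L4}, after forcing $x_1\cdots x_{n-2}=1$, for the case $d=a_1+\cdots+a_n$), which is exactly your argument. Your care about relabelling the pairs $(a_i,x_i)$ simultaneously is also well placed --- that is precisely the issue the paper handles in its sharper Lemma~\ref{L5}, which you could cite directly instead of redoing the sorting step.
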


There is a sharper version of Lemma \ref{L4}.

\begin{lem}\label{L5}
Let $\overline{x}=(x_1,\ldots,x_n) \in F_n$ and let $z_1,\ldots, z_{n-2}$ be the $n-2$ smallest numbers of $x_1,\ldots, x_n$. Then 
\begin{align*}
dz_1\cdots z_{n-2} \leq a_1 + a_2 + \cdots +a_n,
\end{align*}
and equality holds if and only if $x_1=x_2=\cdots=x_n$.
\end{lem}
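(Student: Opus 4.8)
The plan is to deduce Lemma \ref{L5} from Lemma \ref{L4} by a relabelling trick. The key observation is that Lemma \ref{L4} carries the extra hypothesis $x_1\le\cdots\le x_n$, whereas a fundamental solution of \eqref{2} need not have its coordinates in increasing order; one cannot simply sort the $x_i$, because the coefficients $a_i$ are pinned to their indices and sorting the variables alone would destroy the equation. The remedy is to permute the \emph{coefficients} together with the \emph{variables}, so that the sorted tuple becomes an ordered fundamental solution of a companion equation of the same type, to which Lemma \ref{L4} does apply.

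Concretely, I would choose a permutation $\sigma$ of $\{1,\ldots,n\}$ with $x_{\sigma(1)}\le x_{\sigma(2)}\le\cdots\le x_{\sigma(n)}$ and set $b_i:=a_{\sigma(i)}$ and $y_i:=x_{\sigma(i)}$. Then $b_1y_1^2+\cdots+b_ny_n^2=dy_1\cdots y_n$, and since the $b_i$ are merely a reordering of the $a_i$ they still satisfy $b_i\mid d$ and $\gcd(b_i,b_j)=1$ for $i\ne j$; hence $\overline{y}=(y_1,\ldots,y_n)$ solves a generalized Hurwitz equation of the form \eqref{2} with the same $d$. Next I would verify that $\overline{y}$ is an ordered fundamental solution of this companion equation: by Lemma \ref{L3}, $\overline{x}\in F_n$ means $2a_ix_i\le d\prod_{j\ne i}x_j$ for all $i$, and reading this off at the index $\sigma(i)$ gives exactly $2b_iy_i\le d\prod_{l\ne i}y_l$, which is the fundamental-solution criterion for $\overline{y}$. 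As $y_1\le\cdots\le y_n$ by construction, Lemma \ref{L4} applies and yields $dy_1\cdots y_{n-2}\le b_1+\cdots+b_n$. Translating back, the right-hand side equals $a_1+\cdots+a_n$ because the sum is independent of the ordering, while $y_1\cdots y_{n-2}=z_1\cdots z_{n-2}$ since $y_1,\ldots,y_{n-2}$ are precisely the $n-2$ smallest of the $x_i$; this gives the claimed bound, and the equality clause transfers verbatim, as Lemma \ref{L4} gives equality iff $y_1=\cdots=y_n$, i.e.\ iff $x_1=\cdots=x_n$.

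The step requiring the most care is the verification that this simultaneous (``locked'') permutation of coefficients and variables preserves both the equation and the fundamental-solution property; this is the whole content of the sharpening, and it is here that one must avoid the erroneous move of sorting the $x_i$ in isolation. Once the locked relabelling is set up correctly, the result reduces mechanically to Lemma \ref{L4}, so I expect no computation beyond what that lemma already provides. (An alternative, more self-contained route would be to repeat the argument of Lemma \ref{L4} verbatim but with $x_{n-1},x_n$ replaced by the second-largest and largest coordinates $x_p,x_q$ of $\overline{x}$, using the fundamental inequality at the index of the largest coordinate; this works equally well but duplicates the analytic estimate, so I would prefer the reduction above.)
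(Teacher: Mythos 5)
Your proposal is correct and is essentially the paper's own argument: the paper's proof also sorts the coordinates and reduces to Lemma \ref{L4}, justifying the relabelling by the parenthetical remark that ``we have no assumption on the size of the $a_i$'' --- i.e.\ exactly your locked permutation of coefficients together with variables. You merely spell out explicitly (via $\sigma$, $b_i=a_{\sigma(i)}$, $y_i=x_{\sigma(i)}$, and the transfer of the Lemma \ref{L3} criterion) what the paper leaves implicit.
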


\begin{proof}
We assume that $x_i = z_i$ for $i=1,\ldots,n-2$ and $x_1\leq x_2\leq \cdots \leq x_{n-2}$ (recall that we have no assumption on the size of the $a_i$).\\
 Then $x_{n-2} \leq x_{n-1}, x_n$.
If $x_{n-1} \leq x_n$ then we are in the situation of Lemma~\ref{L4}. If $x_n < x_{n-1}$ than we replace $(x_{n-1}, x_n)$ by $(x_n,x_{n-1})$. 
\end{proof}

\begin{thm}\label{T6}
Let $n \geq 3$ be fixed. Then $L_n \ne \emptyset$ only for finitely many $a_1,\ldots, a_n, d$. Especially $F_n$ is finite if $L_n \ne \emptyset$.
\end{thm}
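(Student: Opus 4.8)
The plan is to prove both assertions by a descent that bounds the data $(a_1,\dots,a_n,d)$ in terms of $n$ alone. The starting point is that the hypotheses $a_i\mid d$ together with $\gcd(a_i,a_j)=1$ for $i\neq j$ force $a_1\cdots a_n\mid d$, so in particular $a_1\cdots a_n\le d$; on the other hand Corollary \ref{C3} gives $d\le a_1+\cdots+a_n$ whenever $L_n\neq\emptyset$. Combining these yields $a_1\cdots a_n\le a_1+\cdots+a_n$. Since $a_1\cdots a_n\le d$ forces $a_i\le d$ for every $i$, it suffices to bound $d$ (equivalently $\sum_i a_i$) by a function of $n$: then there are only finitely many admissible tuples, which is the first assertion.

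To bound $\sum_i a_i$ I would split according to the number $k$ of indices with $a_i\ge 2$. If $k=0$ then $d\le n$ and we are done. If $k\ge 2$, the inequality $a_1\cdots a_n\le\sum_i a_i$ already forces each coefficient to be $\le n$: writing $P,S$ for the product and sum of the coefficients exceeding $1$, one has $P\le S+(n-k)$, and a product of $k\ge 2$ integers $\ge 2$ dominates their sum (for $k=2$, $(a_{i_1}-1)(a_{i_2}-1)=P-S+1\le n-1$; for $k\ge 3$, $2^{k-1}\max\le k\,\max+n$ with $2^{k-1}>k$). Hence $\sum_i a_i\le n^2$ and $d$ is bounded. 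The only remaining case is $k=1$, and this is the genuinely delicate one.

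In the case $k=1$ the coefficients are a permutation of $(1,\dots,1,m)$ with $m\ge 2$, and since $d$ is a multiple of $m$ with $d\le m+(n-1)$, the value $d$ is already bounded unless $m\ge n$, in which case $d=m$ is forced. So the crux is to show that the equation with coefficients $(1,\dots,1,m)$ and $d=m$ admits a solution for only finitely many $m$. Here I would use Theorem \ref{T5} to pass to a fundamental solution and apply Lemma \ref{L5}: since $d=m$ and $\sum_i a_i=m+n-1<2m$, the product of the $n-2$ smallest coordinates is $<2$, hence equals $1$, so all but at most two coordinates equal $1$. The equation then collapses to a two-variable Markoff--Rosenberger type relation $u^2+v^2+c=m(uv-1)$ (or a close variant, according to where the coefficient $m$ sits), with $c$ bounded in terms of $n$. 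The minimality of the fundamental solution, via Vieta jumping, should force $u$ and $v$, and hence $m=\tfrac{u^2+v^2+c}{uv-1}$, to be bounded by a function of $n$. This last step — the finiteness of the admissible ``multiplier'' $m$ for a minimal solution of such a Markoff-type equation — is the main obstacle, and is precisely where the classical Markoff descent is needed.

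Finally, for the second assertion ($F_n$ finite once $L_n\neq\emptyset$), fix an admissible tuple. By Lemma \ref{L4} every fundamental solution, sorted as $x_1\le\cdots\le x_n$, satisfies $d\,x_1\cdots x_{n-2}\le\sum_i a_i$, so the $n-2$ smallest coordinates lie in a finite set. For each such choice the equation becomes a two-variable equation $au^2+bv^2+C'=Buv$ in the two largest coordinates $u\le v$, with $a,b,C',B$ now fixed and $C'=\sum_{\text{small}}a_ix_i^2>0$. The minimality of a fundamental solution gives, by Vieta, $au^2\le a\,uu'=bv^2+C'$ and symmetrically, i.e. $|au^2-bv^2|\le C'$; setting $E:=au^2-bv^2$ (finitely many values) and eliminating, one finds that $w:=uv$ satisfies the single quadratic $(B^2-4ab)w^2-2BC'w+(C'^2-E^2)=0$, which has at most two roots. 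The degenerate case $B^2=4ab$ is impossible, since then $au^2-Buv+bv^2$ is a perfect square $\ge 0$ yet must equal $-C'<0$. Thus only finitely many pairs $(u,v)$ occur, so $F_n$ is finite.
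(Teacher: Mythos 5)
Your skeleton matches the paper's: from $a_1\cdots a_n\mid d$ and Corollary \ref{C3} you get $a_1\cdots a_n\le a_1+\cdots+a_n$, you dispose of the cases where all coefficients are small (your $k=0$, $k\ge 2$, and $k=1$ with $m<n$ arguments are correct and bound $d$ by a function of $n$, exactly as in the paper's Case II, where all $a_i\le n$ gives $d\le n^2$ via Lemma \ref{L4}), and you isolate the hard case: coefficients $(1,\dots,1,m)$ with $m\ge n$, where $d=m$ is forced. But at this hard case your proposal stops being a proof. You reduce (correctly, via Lemma \ref{L5}) to a two-variable equation of the shape $u^2+v^2+c=m(uv-1)$, and then write that minimality ``should force'' $u,v$, and hence $m$, to be bounded, calling this ``the main obstacle \dots precisely where the classical Markoff descent is needed.'' That obstacle \emph{is} the theorem: for fixed $n$ there are infinitely many candidate tuples $(1,\dots,1,m)$ with $d=m$, and without bounding $m$ the first assertion is simply not established. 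Moreover your parenthetical ``close variant, according to where the coefficient $m$ sits'' hides a genuine sub-case. The paper's Case I executes all of this: it first proves the coordinate carrying the coefficient $m$ must equal $1$ (assuming it exceeds $1$, fundamentality forces $x_n^2(m-4)\le n-2$, which for $m>n$, $m\ne 4$ gives $x_n=1$, while $m=4$ makes the discriminant $-4(n-2)<0$, a contradiction); it then shows $x_{n-2}=1$, reducing to $mx_{n-1}-m=n-2+x_{n-1}^2$, where fundamentality ($2x_{n-1}\le m$) yields $x_{n-1}<4$, whence $m=n+2$ (for $x_{n-1}=2$) or $(n,m)=(5,6)$ (for $x_{n-1}=3$). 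The descent does close, but it must be carried out, and it is where essentially all the work of the paper's proof lies.

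By contrast, your argument for the second assertion (finiteness of $F_n$ for a fixed admissible tuple) is complete, correct, and genuinely different from the paper's. Both start from Lemma \ref{L4} to confine the $n-2$ smallest coordinates to a finite set; for the two largest coordinates the paper argues $x_n\le 2c_1$ unless $\frac{c_2}{2}x_{n-1}-a_nx_n=0$, in which case $x_{n-1},x_n$ are uniquely determined, whereas you derive $|au^2-bv^2|\le C'$ from fundamentality via Vieta and eliminate to get the quadratic $(B^2-4ab)w^2-2BC'w+(C'^2-E^2)=0$ in $w=uv$, correctly excluding the degenerate case $B^2=4ab$ because $au^2-Buv+bv^2=(\sqrt{a}\,u-\sqrt{b}\,v)^2\ge 0$ cannot equal $-C'<0$. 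This is a valid and arguably tidier route to that half of the statement. Nevertheless, since the first (and harder) assertion is left unproven at its acknowledged crux, the proposal as a whole has a genuine gap.
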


\begin{proof}
Since $a_i|d$, $1 \leq i \leq n$, and $gcd(a_i,a_j)=1$ for $i \ne j$ we have $d~=~ga_1\cdots a_n$ for some $g \in \mathbb N$.
Hence, from Corollary \ref{C3}, we get $$a_1\cdots a_n \leq a_1+ \cdots + a_n$$ and therefor
\begin{align}\label{*}
1 \leq \frac{a_1 + \cdots + a_n}{a_1\cdots a_n}.
\end{align} 
We remark that the function 
\begin{align*}
\varphi (y_1,\ldots, y_n):= \frac{y_1+ \cdots + y_n}{y_1 \cdots y_n}, \quad y_1,\ldots,y_n > 0,
\end{align*}
decreases monotonly for each component $y_i$ because
\begin{align*}
  &\frac{\partial \varphi}{\partial y_i}
 = - \frac{1}{y_i^2} \frac{\sum_{i \ne j} y_i}{\prod_{i \ne j} y_i} < 0.
\end{align*}
We consider the following two cases:\\
\underline{Case \phantom{I}I:} At least one $a_i$ is bigger than $n$.\\
\underline{Case II:} It is $a_i \leq n$ for all $i=1,\ldots,n$.\\

\hspace{-0.8cm}\underline{Case I:} \\
Without loss of generality, let $a_n= m >n$.\\
Then necessarily $a_1=\cdots = a_{n-1}=1$ because otherwise
\begin{align*}
\frac{a_1+ \cdots +a_{n-1}+m}{a_1\cdots a_{n-1}m} \leq \frac{m+2+(n-2)}{2m} = \frac{m+n}{2m} < \frac{2m}{2m} =1
\end{align*}
by the monotony of the function $\varphi$, and this contradicts \eqref{*}.\\
Hence $a_1=\cdots=a_{n-1}=1$. This leads to
\begin{align*}
d=gm \leq (n-1) + m < 2m
\end{align*}
and therefore $g=1$, that is, d=m. But then \eqref{2} becomes
\begin{align*}
x_1^2+\cdots+ x_{n-1}^2 + mx_n^2 = mx_1\cdots x_n.
\end{align*}
Let $\overline{x}=(x_1,\ldots,x_n) \in F_n$ be a fundamental solution of \eqref{2}. Without loss of generality, we may assume $$x_1 \leq x_2 \leq \ldots \leq x_{n-1}.$$
Let $z_1,\ldots,z_{n-2}$ be the $n-2$ smallest of the $x_1,\ldots,x_n$.\\
 Then $mz_1 \cdots z_{n-2} \leq n-1+m < 2m$ by Lemma \ref{L5}. But this leads to $$z_1=\cdots=z_{n-2}=1.$$
We show that $x_n=1$.\\
Assume $x_n>1$ then necessarily $x_1=\cdots=x_{n-2}=1$, and hence,  $$(n-2)+x_{n-1}^2 + mx_n^2=mx_{n-1}x_n.$$ Since $\overline{x} \in F_n$ we get
\begin{align}\label{5}
2x_n \leq x_{n-1}= \frac{mx_n}{2} - \frac{1}{2} \sqrt{m^2x_n^2 - 4mx_n^2 - 4(n-2)}.
\end{align}
Hence,
\begin{align*}
m^2x_n^2 - 4mx^2_n - 4(n-2) \leq (mx_n-4x_n)^2 = m^2x_n^2 - 8mx_n^2 + 16x_n^2.
\end{align*}
It follow that
\begin{align*}
4mx_n^2 - 16x_n^2 \leq 4(n-2) \quad \text{ and hence } \quad x_n^2(m-4) \leq n-2.
\end{align*}
This gives $m=4$ or $m\ne 4$ and $1 \leq x_n^2 \leq \frac{n-2}{m-4} < 4$. In this second case we have $x_n=1$ which contradicts our assumption $x_n>1$.
Therefore $m=4$. But then $$m^2x_n^2-4mx_n^2-4(n-2)=-4(n-2)$$ which contradicts with \eqref{5} that $x_{n-1} \in \mathbb N$.\\
Hence, altogether $x_n=1$.\\
 But then $x_1= \cdots =x_{n-3}=1$ because $x_1 \leq x_2 \leq \cdots \leq x_{n-1}$, and we get the equation 
\begin{align*}
(n-3)+x_{n-2}^2+x_{n-1}^2+m=mx_{n-2}x_{n-1}.
\end{align*}
Again, since $\overline{x} \in F_n$ and $x_{n-2} \leq x_{n-1}$ we get
\begin{align*}
x_{n-2} \leq x_{n-1} = \frac{mx_{n-2}}{2} - \frac{1}{2} \sqrt{m^2x_{n-2}^2 - 4x_{n-2}^2 - 4m - 4(n-3)}
\end{align*}
that is,
\begin{align*}
x_{n-2}^2(m-2) \leq m+n-3 \leq 2m-4 = 2(m-2).
\end{align*}
Hence $x_{n-2}^2 \leq 2$, and then $x_{n-2}=1$. Now we get the equation
\begin{align*}
mx_{n-1} - m = n-2+x_{n-1}^2.
\end{align*}
Since $\overline{x} \in F_n$ and $x_n=1$ we have with Lemma \ref{L3} the inequality $x_{n-1} \leq \frac{m}{2}$. Therefore $\frac{m}{2}x_{n-1}-m \leq n-2$, that is, $m(\frac{x_{n-1}}{2} -1 ) \leq n-2 < m$ and therefore $x_{n-1}<4$.
We consider again the above equation
\begin{align*}
mx_{n-1}-m = n-2+x_{n-1}^2.
\end{align*}
Here $x_{n-1}= 1$ is not possible. Now, $x_{n-1}=2$ if and only if $m=n+2$. We are left with the case $x_{n-1}=3$.
Then $n+7 = 2m > 2n$, and hence $3 \leq n \leq 6$ with $n$ odd. The case $n=3$ is not possible because otherwise $m=5 < 2 x_{n-1}$ which contradicts $\overline{x} \in F_n$ (Lemma \ref{L3}). Therefore $n=5$, and $m=6$, if and only if $x_{n-1}=3$. This completes case I.\\

\hspace{-0.8cm}\underline{Case II:} \\
Now it is $a_1,\ldots,a_n \leq n$. Let $\overline{x}=(x_1,\ldots,x_n) \in F_n$.
To prove the theorem we may assume that $x_1 \leq x_2 \leq \cdots \leq x_n$ because we have no order for the $a_1,\ldots,a_n$ by size.
Then 
\begin{align*}
dx_1\cdots x_{n-2} \leq  a_1  + \cdots + a_n \leq n^2
\end{align*}
by Lemma \ref{L4}.
If $n$ is fixed then this inequality gives bounds for  the possible values for $a_1,\ldots,a_n,d,x_1,\ldots,x_{n-2}$. For each of the possible combinations of these values the numbers
\begin{align*}
c_1&:= a_1x_1^2+ \cdots + a_{n-2}x_{n-2}^2 \quad \text{ and }\\
c_2&:= dx_1\cdots x_{n-2}
\end{align*}
can be considered as constants. This leads to the equation 
\begin{align}\label{4}
a_{n-1}x_{n-1}^2+a_nx_n^2+c_1 = c_2x_{n-1}x_n
\end{align}
in the two variables $x_{n-1},x_n$.
Now, since $x_{n-1}\leq x_n$ and $\overline{x} \in F_n$, we have
\begin{align*}
x_{n-1} \leq x_n, \quad x_{n-1}\leq \frac{c_2}{2a_{n-1}}x_n \ \text{ and } \ x_n \leq \frac{c_2}{2a_n}x_{n-1}. 
\end{align*}
Then 
\begin{align*}
c_2x_{n-1}x_n - a_n x_n^2 = a_{n-1}x_{n-1}^2+c_1 \leq \frac{c_2}{2}x_{n-1}x_n + c_1
\end{align*}
and therefore
\begin{align*}
\frac{c_2}{2} x_{n-1}x_n - a_nx_n^2 \leq c_1.
\end{align*}
It follows 
\begin{align*}
x_n\left( \frac{c_2}{2}x_{n-1}-a_nx_n \right) \leq c_1.
\end{align*}
Since 
\begin{align*}
\frac{c_2}{2}x_{n-1}-a_nx_n \geq 0
\end{align*}
we get
\begin{align*}
x_n \leq 2c_1 \quad \text{ or } \quad \frac{c_2}{2} x_{n-1} - a_n x_n =0.
\end{align*}
In this first case we have bounds for $x_n$ and hence also for $x_{n-1}$.
Now let $\frac{c_2}{2}x_{n-1}-a_nx_n=0$. Then 
\begin{align*}
x_n= \frac{c_2}{2a_n} x_{n-1} \quad \text{ and } \quad x_{n-1}^2 \left( c_2^2 - 4a_na_{n-1} \right) = 4a_nc_1 >0
\end{align*}
from equation \eqref{4}.
Therefor $x_n$ and $x_{n-1}$ are uniquely determined if $L_n \ne \emptyset$.\\
This proves Theorem \ref{T6}.
\end{proof}

\begin{cor}\label{C4}
Let $n \geq 3$, $a_1,\ldots, a_n, d \in \mathbb N$, $a_i|d$ for $i=1,\ldots,n$ and $gcd(a_i,a_j)=1$ for $i \ne j$. Let $a_n > n$. Then the diophantine equation 
\begin{align*}
a_1x_1^2+ \cdots + a_n x_n^2 = d x_1 \cdots x_n
\end{align*}
has a solution $(x_1,\ldots,x_n) \in \mathbb N^n$ if and only if $a_1= \cdots =a_{n-1}=1$,\\
$d=a_n > n$ and one of the following cases holds:
\begin{enumerate}
\item[(1)] $d=n+2=a_n$;
\item[(2)] $n=5$, $d=6=a_n$.
\end{enumerate} 
If we assume that $x_1\leq \cdots \leq x_{n-1}$ then $(1,\ldots,1,2,1)$ is a fundamental solution in case \textup{(1)}; and $(1,1,1,3,1)$ is a fundamental solution in case \textup{(2)}.
\end{cor}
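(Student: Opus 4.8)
The hypothesis $a_n > n$ places us precisely in the situation of \underline{Case I} in the proof of Theorem \ref{T6}, so the necessity half of the statement is essentially already contained there; my plan is to reorganize that analysis so as to read off the sharp list of possibilities, and then to settle the sufficiency half by direct substitution. Throughout I assume $L_n \ne \emptyset$ and fix a fundamental solution $\overline{x} \in F_n$.

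First I would pin down the coefficients. By the coprimality and divisibility hypotheses we have $d = g\,a_1\cdots a_n$ for some $g \in \mathbb N$, and Corollary \ref{C3} gives $a_1\cdots a_n \le a_1 + \cdots + a_n$. Writing $a_n = m > n$ and using the monotonicity of $\varphi(y_1,\ldots,y_n) = (y_1+\cdots+y_n)/(y_1\cdots y_n)$ in each variable, a second coefficient $\ge 2$ would force $a_1\cdots a_n > a_1 + \cdots + a_n$, a contradiction; hence $a_1 = \cdots = a_{n-1} = 1$. Then $d = gm \le (n-1) + m < 2m$ forces $g = 1$, so $d = a_n = m$ and the equation reduces to
\begin{align*}
x_1^2 + \cdots + x_{n-1}^2 + m\,x_n^2 = m\,x_1\cdots x_n .
\end{align*}

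The heart of the argument is to determine the fundamental solutions of this reduced equation, assuming $x_1 \le \cdots \le x_{n-1}$. Applying Lemma \ref{L5} to the $n-2$ smallest coordinates gives $m z_1\cdots z_{n-2} \le (n-1) + m < 2m$, hence $z_1 = \cdots = z_{n-2} = 1$. I would next show $x_n = 1$: if $x_n > 1$ then $x_1 = \cdots = x_{n-2} = 1$, and solving the resulting quadratic for $x_{n-1}$ together with the fundamental inequality $2x_n \le x_{n-1}$ of Lemma \ref{L3} yields $x_n^2(m-4) \le n-2$, which after discharging $m = 4$ separately forces $x_n = 1$. With $x_n = 1$ I would cascade downwards: $x_1 = \cdots = x_{n-3} = 1$, then the same quadratic estimate gives $x_{n-2} = 1$, leaving the single-variable equation $m x_{n-1} - m = (n-2) + x_{n-1}^2$. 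The fundamental inequality $x_{n-1} \le m/2$ from Lemma \ref{L3}, fed back into this equation, then bounds $x_{n-1} < 4$; ruling out $x_{n-1} = 1$ and treating $x_{n-1} = 2$ and $x_{n-1} = 3$ in turn yields exactly case (1), where $m = n+2$, and case (2), where $n = 5$ and $m = 6$ (the value $n = 3$ being excluded because $m = 5 < 2x_{n-1}$ violates membership in $F_n$).

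For sufficiency it remains to check that the two displayed tuples are genuine fundamental solutions. Direct substitution verifies the equation in each case — for $(1,\ldots,1,2,1)$ both sides equal $2n+4$, and for $(1,1,1,3,1)$ with $n=5$ both sides equal $18$ — while the inequalities $2a_ix_i \le d\prod_{j\ne i}x_j$ of Lemma \ref{L3} confirm that both lie in $F_n$. The main obstacle is the necessity step forcing $x_n = 1$ and then $x_{n-2} = 1$: because the large coefficient $m$ occurs on \emph{both} sides of the reduced equation, the crude bound of Lemma \ref{L4} (or Lemma \ref{L5}) is not by itself decisive for the coordinate carrying $a_n$, and one must instead extract a quadratic inequality in that coordinate and handle the borderline value $m = 4$ by hand.
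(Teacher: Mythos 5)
Your proposal is correct and takes essentially the same route as the paper: Corollary \ref{C4} is precisely the content of Case I in the proof of Theorem \ref{T6}, and you reproduce that analysis faithfully (reduction to $a_1=\cdots=a_{n-1}=1$ and $d=a_n=m$ via Corollary \ref{C3} and the monotonicity of $\varphi$, then the quadratic estimates with the borderline case $m=4$ forcing $x_n=1$, the cascade $x_{n-2}=1$, and the final bound $x_{n-1}<4$ yielding the two cases). Your explicit sufficiency check by substitution together with the inequalities of Lemma \ref{L3} is routine but correctly supplies the converse direction that the paper leaves implicit.
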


\begin{rem}
Theorem \ref{T6} gives us the possibility to calculate for a given $n$ all values $a_1, \ldots, a_n, d\in \mathbb N$  with $a_i|d$ for $i=1,\ldots,n$ and $gcd(a_i,a_j)=1$ for $i\ne j$, such that $L_n\ne \emptyset$. Nevertheless this would be a difficult task. We have the complete solution for the case $n=3$.
\end{rem}

\begin{thm}\cite{R2} \label{T7}~\\
Let $a,b,c,d \in \mathbb N$ with $1 \leq a \leq b \leq c$, $a|d$, $b|d$, $c|d$, $gcd(a,b)=gcd(b,c)=gcd(a,c)=1$.\\
The diophantine equation 
\begin{align*}
ax^2+by^2+cz^2 = dxyz
\end{align*}
has a solution $(x,y,z) \in \mathbb N^3$ if and only if one of the following cases holds:

\begin{table}[h]
	\begin{center}
	\begin{tabular}{c | c | c | c | c }
	$a$ &$b$ & $c$ & $d$  & Fundamental solution $(x,y,z)$\\
	\hline
	$1$ & $1$ & $1$ & $1$ & $(3,3,3)$\\
	$1$ & $1$ & $1$ & $3$ & $(1,1,1)$\\
	$1$ & $1$ & $2$ & $4$ & $(1,1,1)$\\
	$1$ & $2$ & $3$ & $6$ & $(1,1,1)$\\
	$1$ & $1$ & $2$ & $2$ & $(2,2,2)$\\
	$1$ & $1$ & $5$ & $5$ & $(1,2,1)$ and $(2,1,1)$
	\end{tabular}
	\end{center}
	\end{table}
\end{thm}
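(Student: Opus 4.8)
The plan is to combine the divisibility structure with Corollary \ref{C3} to cut the problem down to finitely many quadruples $(a,b,c,d)$, leaving exactly one infinite family that has to be treated by hand. First I would observe that, since $a,b,c$ are pairwise coprime and each divides $d$, their product divides $d$; write $d = g\,abc$ with $g \in \mathbb N$. By Corollary \ref{C3}, $L_3 \ne \emptyset$ forces $d \leq a+b+c \leq 3c$, hence $g\,abc \leq 3c$ and therefore $g\,ab \leq 3$. With $1 \leq a \leq b$ this leaves only the triples $(g,a,b) \in \{(1,1,1),(1,1,2),(1,1,3),(2,1,1),(3,1,1)\}$.

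In each case except $(1,1,1)$ the inequality $g\,abc \leq a+b+c$, together with $c \geq b$ and the coprimality constraints, bounds $c$ to finitely many values: $(1,1,2)$ forces $c=3$ (since $\gcd(2,c)=1$ makes $c$ odd and $c\leq 3$), $(1,1,3)$ is impossible ($c \leq 2 < 3 = b$), $(2,1,1)$ gives $c \in \{1,2\}$, and $(3,1,1)$ gives $c=1$. For the resulting quadruples with $d = a+b+c$, namely $(1,1,1,3)$, $(1,1,2,4)$ and $(1,2,3,6)$, the second part of Corollary \ref{C3} immediately yields the unique solution $(1,1,1)$; the remaining candidate $(1,1,1,2)$ has $d < a+b+c$ and is excluded by a one-line reduction (its equation collapses to $(y-z)^2 = -1$).

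The real work is the family $(g,a,b) = (1,1,1)$, i.e. $x^2+y^2+cz^2 = cxyz$ with $d=c$, where the crude bound does \emph{not} restrict $c$. Here Lemma \ref{L4} gives $c\,x_1 \leq c+2$, so the smallest coordinate of a fundamental solution satisfies $x_1 \leq 1 + 2/c$. For $c=1$ and $c=2$ this allows $x_1 \leq 3$ and $x_1 \leq 2$ respectively, and a direct inspection (again reducing to a sum-of-squares identity) pins down the unique fundamental solutions $(3,3,3)$ and $(2,2,2)$. For $c \geq 3$ one has $x_1 = 1$, and I would split according to whether the coordinate equal to $1$ carries coefficient $c$ (giving $x^2+y^2+c = cxy$) or coefficient $1$ (giving $1+y^2+cz^2 = cyz$). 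In both subcases the fundamental-solution inequalities of Lemma \ref{L3}, namely $2x \leq cyz$, $2y \leq cxz$ and $2z \leq xy$, combined with the equation, collapse everything to the discriminant condition $(c-2)^2 - 8 = \square$, which forces $c = 5$ together with the two fundamental solutions $(1,2,1)$ and $(2,1,1)$; these are genuinely distinct because $M_3 = \langle \psi_1,\psi_2,\psi_3\rangle$ does not contain the transposition of the two equal-coefficient coordinates.

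I expect this last family to be the main obstacle. It is the only case in which Corollary \ref{C3} leaves $c$ unbounded, so the finiteness that Theorem \ref{T6} guarantees in the abstract must here be made \emph{effective} through the explicit descent and discriminant analysis. The two delicate points are ensuring that the subcase $z \geq 2$ (smallest coordinate carrying coefficient $1$) really is eliminated by the fundamental inequalities rather than merely pushed to higher solutions, and remembering to list both $c=5$ fundamental solutions, which is easy to overlook precisely because they differ by a symmetry that lies outside $M_3$.
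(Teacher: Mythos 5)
Your proposal cannot be checked against the paper's own argument for a simple reason: the paper states Theorem \ref{T7} as a quoted result from \cite{R2} and contains no proof of it. What you have written is therefore a self-contained reconstruction built from the paper's Section 3 machinery (Corollary \ref{C3}, Lemmas \ref{L3} and \ref{L5}), very much in the spirit of the paper's proof of Theorem \ref{T6}, Case II (bound the small coordinates, then analyze the residual two-variable equation using the fundamentality inequalities). As far as I can verify, your plan is correct. The skeleton is sound: pairwise coprimality and $a|d$, $b|d$, $c|d$ give $d=g\,abc$, Corollary \ref{C3} gives $g\,ab\le 3$, your five triples $(g,a,b)$ and the induced bounds on $c$ are right, the three quadruples with $d=a+b+c$ are settled by the equality case of Corollary \ref{C3}, and $(1,1,1,2)$ dies because Lemma \ref{L5} forces the smallest coordinate of a fundamental solution to be $1$ (from $2x_1\le 3$), whence $(y-z)^2=-1$.

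The place where your write-up is thinner than the work it requires is the family $x^2+y^2+cz^2=cxyz$ with $c\ge 3$, and your two flagged ``delicate points'' are exactly where the work lies --- but both do go through. In your subcase B ($x=1$, $z\ge 2$): the equation gives $y^2+1=cz(y-z)$, fundamentality gives $2y\le cz$, hence $y^2+1\ge 2y(y-z)$, so $y\le 2z$; combined with $2z\le y$ this forces $y=2z$, and substituting back yields $(c-4)z^2=1$, so $z=1$, a contradiction. In your subcase A ($z=1$) you must additionally rule out $\min(x,y)\ge 2$, which your sketch does not mention: writing $c(xy-1)=x^2+y^2$ and using $2y\le cx$ (with $x\le y$) gives $x(y-x)(y+x)\le 2y$, impossible for $y>x\ge 2$, while $y=x\ge 2$ makes $c=2x^2/(x^2-1)$ non-integral. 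Only after these two eliminations does everything collapse, as you claim, to two unit coordinates and the single discriminant condition $(c-2)^2-8=\square$, i.e.\ $c=5$ with the two fundamental solutions $(1,2,1)$ and $(2,1,1)$, which are indeed inequivalent since $M_3=\langle\psi_1,\psi_2,\psi_3\rangle$ contains no coordinate transposition. Similarly, the $c=1$ and $c=2$ ``inspections'' need a short Vieta-type argument (for $c=1$, fundamentality gives $(z-y)(z+y)\le 9$ for the two largest coordinates, leaving finitely many checks that land on $(3,3,3)$; for $c=2$ a parity descent gives $(2,2,2)$). So: a correct route, with gaps that are genuinely fillable by the inequalities you name, and one unflagged gap (subcase A with $\min(x,y)\ge 2$) that also fills in cleanly.
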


\begin{rem}
\begin{enumerate}
\item[1.)] If $x^2+y^2+z^2 = xyz$ for $x,y,z \in \mathbb N$ then 
\begin{align*}
x &\equiv 0 \pmod 3,\\
y &\equiv 0 \pmod 3 \ \text{ and }\\
z &\equiv 0 \pmod 3,
\end{align*}
and division by $9$ leads to the equation 
\begin{align*}
x^2+y^2+z^2 = 3xyz.
\end{align*}
If $x^2+y^2+2z^2 = 2xyz$ for $x,y,z \in \mathbb N$ then
\begin{align*}
x &\equiv 0 \pmod 2,\\
y &\equiv 0 \pmod 2 \ \text{ and }\\
z &\equiv 0 \pmod 2
\end{align*}
and division by $4$ leads to the equation 
\begin{align*}
x^2+y^2+2z^2 = 4xyz.
\end{align*}
In this sense we are left with the four equations
\begin{align*}
x^2+\phantom{2}y^2+\phantom{2}z^2 &= 3xyz,\\
x^2+\phantom{2}y^2+2z^2 &= 4xyz,\\
x^2+2y^2+3z^2 &= 6xyz \ \text{ and }\\
x^2+\phantom{2}y^2+5z^2 &= 5xyz.
\end{align*}
These four equations are in 1-1 correspondence with the four $PGL(2,\mathbb R)$-conjugacy classes of the four free two generator arithmetic Fuchsian groups of genus $1$ (see \cite{FR}, \cite{R2}, \cite{T}).
\item[2.)] Theorem \ref{T6} gives us the possibility to describe the frequency such that $F_n \ne \emptyset$ for any fixed $n \geq 3$ by adapting the argument in \cite{Ba1}.\\
Let $a_1,\ldots, a_n, d \in \mathbb N$ with $a_1 \leq a_2 \leq \cdots \leq a_n$, $a_i | d$ for $i=1,\ldots,n$ and $gcd(a_i,a_j)=1$ for $i\ne j$. Let $\overline{d}=(a_1,\ldots,a_n,d) \in \mathbb N^{n+1}$ and define 
\begin{align*}
A(n):= \# \{ \overline{d} \in \mathbb N^{n+1} | F_n \ne \emptyset \} 
\end{align*}
Then $A(n)= O(n^{\frac{1}{2}+ \epsilon})$ for every $\epsilon >0$.
\end{enumerate} 
\end{rem}

\section{The Baragar-Umeda equation}\label{sec4}

In this section we consider the diophantine equations \eqref{BU}, which are
\begin{align*}
ax^2+by^2+cz^2 = dxyz+e
\end{align*}
with $a,b,c,d,e \in \mathbb N$ such that $a|d$, $b|d$, $c|d$ and $gdc(a,b,c)=1$. It is obvious that we here assume $gcd(a,b,c)=1$ because if $gcd(a,b,c)=t$ then $t|e$.\\
We write 
\begin{align*}
L:=\{(x,y,z) \in \mathbb Z^3 | ax^2+by^2+cz^2=dxyz + e\}.
\end{align*}
We assume that $L \ne \emptyset$. With $(x,y,z) \in L$ we also have 
\begin{align*}
\left( \frac{d}{a}yz-x,y,z \right) \in L, \quad \left( x, \frac{d}{b}xz-y,z \right) \in L, \quad \text{ and } \quad \left( x,y, \frac{d}{c}xy-z \right) \in L.
\end{align*}
Let $M$ be the group generated by
\begin{align*}
\psi_1: (x,y,z) &\mapsto \left( \frac{d}{a}yz-x,y,z \right),\\
\psi_2: (x,y,z) &\mapsto \left( x,\frac{d}{b}xz-y,z \right) \ \text{ and}\\
\psi_3: (x,y,z) &\mapsto \left( x, y, \frac{d}{c}xy-z \right).
\end{align*}
Again, $M$ is the free product $\mathbb Z_2 * \mathbb Z_2 * \mathbb Z_2$ of three cyclic groups of order $2$ (recall that $\psi_i^2 = 1$  for $i=1,2,3$). This especially shows that $L \cap \mathbb N^3 \ne \emptyset$ if $L \ne \emptyset$.
As suggested by Baragar and Umeda we ask for fundamental solutions of \eqref{BU} in the sense of Lemma \ref{L3}, that is, for solutions $(x,y,z) \in \mathbb N^3$ of \eqref{BU} with 
\begin{align*}
1 \leq x \leq \frac{d}{2a}yz, \quad 1 \leq y \leq \frac{d}{2b}xz \ \text{ and } \ 1 \leq z \leq \frac{d}{2c}xy.
\end{align*}
Let $F$ be the set of fundamental solutions of \eqref{BU}. It is possible that $F = \emptyset$ although if $L \ne \emptyset$ (if $L = \emptyset$ then certainly $F=\emptyset$, also).

\begin{thm}\label{T8}
If $F \ne \emptyset$ then $1 \leq e \leq 4$.
\end{thm}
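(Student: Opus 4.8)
The plan is to symmetrize the equation. Setting $u=\sqrt a\,x$, $v=\sqrt b\,y$, $w=\sqrt c\,z$ and $\delta=d/\sqrt{abc}$ turns \eqref{BU} into the symmetric shape $u^2+v^2+w^2=\delta uvw+e$, exactly as in the substitution already used earlier in this section, and the three fundamental-domain conditions $2ax^2\le dxyz$, $2by^2\le dxyz$, $2cz^2\le dxyz$ become $2u\le\delta vw$, $2v\le\delta uw$, $2w\le\delta uv$. After relabelling I may assume $u\le v\le w$, so that the binding inequality is $w\le\tfrac{\delta uv}{2}$.

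The core step is a one-variable convexity argument in the spirit of the proof of Lemma \ref{L4}. I would introduce $f(t)=t^2-\delta uv\,t+(u^2+v^2-e)$, for which $f(w)=0$ by the equation. Its vertex sits at $t=\tfrac{\delta uv}{2}\ge w$, so $f$ decreases on $[0,\tfrac{\delta uv}{2}]$; since $0\le v\le w$ both lie in this interval, monotonicity gives $f(v)\ge f(w)=0$, i.e. $e\le u^2+v^2(2-\delta u)$. From $v\le w\le\tfrac{\delta uv}{2}$ one reads off $\delta u\ge 2$, hence $2-\delta u\le 0$; combining with $v\ge u$ collapses this to the clean bound $e\le u^2(3-\delta u)$. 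In particular, since $e\ge 1$ and $u>0$, this already forces $\delta u<3$.

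Next I would make the right-hand side depend on a single quantity. Writing $p:=\delta u=dx/\sqrt{bc}\in[2,3)$ and using $u^2=ax^2=\tfrac{abc}{d^2}\,p^2$, the estimate reads $e\le \tfrac{abc}{d^2}\,p^2(3-p)$. A short derivative check shows $p^2(3-p)$ is decreasing on $[2,3]$, so it is at most its value $4$ at $p=2$; thus $e\le 4\cdot\tfrac{abc}{d^2}$.

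It remains to show $abc\le d^2$, and this is where I expect the real (though short) work to lie. I would argue prime by prime: if $q^{\alpha}\,\|\,a$, $q^{\beta}\,\|\,b$, $q^{\gamma}\,\|\,c$, $q^{\nu}\,\|\,d$ for a prime $q$, then $a\mid d$, $b\mid d$, $c\mid d$ give $\alpha,\beta,\gamma\le\nu$, while $\gcd(a,b,c)=1$ forces $\min(\alpha,\beta,\gamma)=0$; hence the exponent $\alpha+\beta+\gamma$ of $q$ in $abc$ is at most $2\nu$, the exponent of $q$ in $d^2$. Summing over all primes yields $abc\mid d^2$, so $abc\le d^2$ and therefore $e\le 4$. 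Together with the hypothesis $e\ge 1$ this gives $1\le e\le 4$. The two points needing care are the transfer of the fundamental-domain inequalities through the substitution together with the choice of the largest coordinate, and the valuation bound $abc\le d^2$; everything else is routine.
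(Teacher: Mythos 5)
Your proof is correct, and after the shared opening it takes a genuinely different --- and much shorter --- route than the paper. Both arguments begin identically: the substitution $u=\sqrt{a}\,x$, $v=\sqrt{b}\,y$, $w=\sqrt{c}\,z$, $\delta=d/\sqrt{abc}$, the ordering of the three new coordinates, and the core inequality $v^2(\delta u-2)\le u^2-e$ (the paper extracts it by solving the quadratic for the largest coordinate and squaring; your monotonicity argument for $f(t)=t^2-\delta uv\,t+u^2+v^2-e$ yields the same inequality more cleanly). From there the paper works with integrality: it splits into the cases $\delta u\le 2$ and $\delta u>2$, uses $d^2/(bc)\in\mathbb N$ to force $5\le (d^2/(bc))\,x^2\le 8$, proves $a\mid d^2/(bc)$, deduces $e\le 7$, and then eliminates $e=7,6,5$ one at a time, in each case bounding $by^2$ and contradicting $\sqrt{a}\,x\le\sqrt{b}\,y$. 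You instead feed the ordering $u\le v$ into the core inequality once, obtaining $e\le u^2(3-\delta u)$, and then optimize: with $p=\delta u\ge2$ (forced by $v\le w\le\delta uv/2$) and $u^2=(abc/d^2)\,p^2$, you get $e\le(abc/d^2)\,p^2(3-p)\le 4\,abc/d^2\le 4$, since $p^2(3-p)$ has derivative $3p(2-p)\le0$ on $[2,\infty)$ and since $abc\mid d^2$; your valuation proof of this divisibility is complete and is the same arithmetic fact the paper invokes in the form $a\mid d^2/(bc)$ (argued there only for prime divisors rather than prime powers). All your steps check out: the fundamental-domain conditions of Lemma \ref{L3} transfer exactly as you say, $\delta u\ge 2$ is immediate, and no case analysis is ever needed. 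What your route buys is brevity and an explanation of the constant: $4$ is the maximum of $p^2(3-p)$ on $p\ge 2$, attained at the boundary $p=2$ and scaled by $abc/d^2\le1$; tracing equality shows $e=4$ forces $\delta u=2$ and $abc=d^2$, consistent with the paper's later observation that $e=4$ requires $d=\sqrt{abc}$. What the paper's casework buys is the side information (the admissible values of $x$, $a$ and $d^2/(bc)$) that it reuses in the subsequent classification of the equations with $1\le e\le3$.
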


\begin{proof}
Let $(x,y,z) \in F$ be a fundamental solution of \eqref{BU}. For symmetric reasons we may assume that 
\begin{align*}
1 \leq \sqrt{a} \cdot x \leq \sqrt{b} \cdot y \leq \sqrt{c} \cdot z.
\end{align*}
We define 
\begin{align*}
x':= \sqrt{a} \cdot x, \quad y':=\sqrt{b} \cdot y, \quad z':=\sqrt{c} \cdot z \ \text{ and } \ d':=\frac{d}{\sqrt{abc}},
\end{align*}
and get the equation 
\begin{align*}
x'^2+y'^2+z'^2 = d'x'y'z' + e
\end{align*}
and hence,
\begin{align*}
(d'x'y'-2z')^2=d'^2x'^2y'^2 - 4x'^2 -4y'^2 +4e.
\end{align*}
From $2cz \leq dxy$ we get $2z' \leq d'x'y'$.
Hence
\begin{align*}
y' \leq z' = \frac{d'}{2}x'y'- \frac{1}{2}\sqrt{d'^2x'^2y'^2 - 4x'^2 - 4y'^2+4e}
\end{align*}
and therefore
\begin{align*}
d'x'y'^2\leq 2y'^2+x'^2 - e \quad \text{ and } \quad  y'^2(d'x'-2)\leq x'^2 - e.
\end{align*}
We have two cases to consider. \\

\hspace{-0.8cm}\underline{Case I:} $d'x' - 2 \leq 0$ \ and\\
\underline{Case II:} $d'x' - 2 > 0$.\\

\hspace{-0.8cm}\underline{Case I:} Let $d'x' - 2 \leq 0$, that is, $d'x' \leq 2$.
We know that 
\begin{align*}
d'x'=\frac{d}{\sqrt{abc}}\sqrt{a} \cdot x = \frac{d}{\sqrt{bc}} x.
\end{align*}
Now, since $b|d$ and $c|d$ we have that $\frac{d}{\sqrt{bc}} \geq 1$.
Therefore we have $1 \leq x \leq 2$.

Let first $x=2$. Then necessarily $d'=\frac{d}{\sqrt{bc}}=1$ because $\frac{d}{\sqrt{bc}}\cdot 2 \geq 2$ and $d'x'=\frac{d}{\sqrt{bc}} x \leq 2$. Therefore $a=1$ (and $bc=d^2$, that is, $b=c=d$), and hence $0 \leq 4-e$, that is, $e \leq 4$, because now $d'x'=2$.

Now let $x=1$.
Then $1 \leq d'x' = \frac{d}{\sqrt{bc}} < 2$. Let $c \geq b$ (the case $b \geq c$ is analogously). Then $1 \leq \frac{d}{c} < 2$ because $c| d$ and necessarily $d=c$.\\
But then $1 \leq \sqrt{\frac{d}{b}} < 2$ and $\frac{d}{b} \in \mathbb N$. Therefore $d=b$, $d=2b$ or $d=3b$.
Now, $(x,y,z)$ is a fundamental solution of \eqref{BU}, that is here,
\begin{align*}
1 \leq z \leq \frac{d}{2c}xy=\frac{y}{2} \quad \text{ and }\quad 1 \leq y \leq \frac{d}{2b}xz \leq \frac{d}{2b} \cdot \frac{y}{2}
\end{align*} 
which gives $4b \leq d$, a contradiction. Hence, $x=1$ cannot occur in case I.\\

\hspace{-0.8cm}\underline{Case II:} Let $d'x' - 2 > 0$. Then 
\begin{align*}
0 < y'^2 (d'x'-2) \leq x'^2 - e \quad \text{ and } \quad y'^2(d'x'-3) \leq -e.
\end{align*}
Hence, $2 < d'x' = \frac{d}{\sqrt{bc}}x < 3$ and $5 \leq \frac{d^2}{bc} x^2 \leq 8$ because $\frac{d^2}{bc} \in \mathbb N$.\\
Therefor $x=2$ or $x=1$.
\begin{enumerate}
\item Let first $x=2$.\\
Since $5 \leq \frac{d^2}{bc}x^2 \leq 8$ we must have $\frac{d^2}{bc}=2$. Then 
\begin{align*}
b=d, \ d=2c \qquad  \text{ or } \qquad c=d, \ d=2b
\end{align*}
and therefore $a=1$ or $a=2$ because $gcd(a,b,c)=1$. In both cases $e \leq 7$ because $0 < x'^2 - e$.
\item Now, let $x=1$. Then 
\begin{align*}
5 \leq \frac{d^2}{bc} = \frac{d}{b} \cdot \frac{d}{c} \leq 8,
\end{align*}
and hence
\begin{align*}
\frac{d^2}{bc}= 5, 6, 7 \text{ or } 8.
\end{align*}
Then $a=1,2,3,4,5,6,7$ or $8$ because $gcd(a,b)=1$.\\
This can be seen as follows.\\
Let $p$ be a prime divisor of $a$. Then $p|d$, and, hence, $p|\frac{d}{b}$ or $p|\frac{d}{c}$ because $gcd(a,b,c)=1$.  So $a|\frac{d^2}{bc}$. In all cases $e \leq 7$ because
\begin{align*}
0 < x'^2 - e = a-e.
\end{align*}
\end{enumerate}
For the next part of the proof we remark that 
\begin{align*}
ax^2 = x'^2 \leq 8
\end{align*}
because $a|\frac{d^2}{bc}$ and $\frac{d^2}{bc}x^2 \leq 8$.\\

So far we have $1\leq e \leq 7$.\\

To prove the theorem 
we first show that $e \leq 6$.\\
Assume that $e=7$. Then $x'^2= ax^2=8$, that is,
\begin{align*}
x=2, \ a=2, \ \frac{d^2}{bc}=2 \quad \text{ or } \quad x=1, \ a=8, \ \frac{d^2}{bc}=8
\end{align*}
because $5 \leq \frac{d^2}{bc} x\leq 8$ and $a | \frac{d^2}{bc}$.\\
We first consider the case $x=1$, $a=\frac{d^2}{bc}=8$.\\
From 
\begin{align*}
y'^2 (d'x'-2) \leq x'^2 -7 =1
\end{align*}
we get
\begin{align*}
by^2 \leq \frac{1}{2 \sqrt{2}-2} < 2,
\end{align*}
 and hence, $y=1$ and $b=1$. This contradicts that $\sqrt{a} \cdot x \leq \sqrt{b} \cdot y$.\\
 Now, let 
\begin{align*}
x=a=\frac{d^2}{bc}=2.
\end{align*}
Again we have
\begin{align*}
x'^2=8 \quad \text{ and } \quad  d'x' = \frac{d}{\sqrt{bc}}x=2 \sqrt{2} \quad \text{ and } \quad y'^2=by^2 < 2.
\end{align*}
Then again $y=1$ and $b=1$ and this contradicts $\sqrt{a} \cdot x \leq \sqrt{b} \cdot y$. \\
Hence we have $e \leq 6$.\\

We now show that $e \leq 5$.\\
Assume that $e=6$. Then $x'^2=ax^2=7$ or $8$.
Let first be $x'^2=8$. Then again 
\begin{align*}
x=2=a=\frac{d^2}{bc} \quad \text{ or } \quad x=1, \ a=\frac{d^2}{bc}=8.
\end{align*}
In both cases
\begin{align*}
by^2 = y'^2 \leq \frac{2}{2 \sqrt{2}-2} < 4,
\end{align*}
that is, $by^2\leq 3$. Then $y=1$, $1 \leq b \leq 3$. In all cases this contradicts $\sqrt{a} \cdot x \leq \sqrt{b} \cdot y$.\\
Now, let $x'^2=ax^2=7$.\\
Then $x=1$, $a=7=\frac{d^2}{bc}$. Hence, $d'x'=\sqrt{7}$. Then
\begin{align*}
1 \leq by^2 = y'^2 \leq \frac{1}{\sqrt{7} - 2} < \frac{5}{3},
\end{align*}
that is $b=y=1$, and again this contradicts $\sqrt{a} \cdot x \leq \sqrt{b} \cdot y$.\\
This gives $e  \leq 5$.\\

We now show that $e \leq 4$.\\
Assume that $e=5$. Then $x'^2=ax^2 = 6$, $7$ or $8$. If $x'^2=8$ we get
\begin{align*}
1 \leq by^2 = y'^2 \leq \frac{3}{2 \sqrt{2} -2} < 6
\end{align*}
which gives 
\begin{align*}
y=1, \ 1\leq b \leq 5 \quad \text{ or } \quad y=2, \ b=1.
\end{align*}
In both cases this contradicts $\sqrt{a} \cdot x = 2 \sqrt{2} \leq \sqrt{b} \cdot y$.\\
If $x'^2 =ax^2 = 7$ we get $1 \leq by^2 = y'^2 < \frac{10}{3}$ that is $y=1$, $1 \leq b \leq 3$.
This again contradicts $\sqrt{a} \cdot x = \sqrt{7} \leq \sqrt{b} \cdot y$.\\
Finally, let $x'^2=ax^2=6$.\\
Then we must have $x=1$ and $a=6$. Since $a|\frac{d^2}{bc}$ we also get $\frac{d^2}{bc}=6$.
 Therefore $d'x'= \sqrt{6}$ and 
\begin{align*}
y'^2 = by^2 \leq \frac{1}{\sqrt{6}-2} < 3,
\end{align*}
 that is $y=1$, $1 \leq b \leq 2$. Again this contradicts $\sqrt{a} \cdot x = \sqrt{6} \leq \sqrt{b}y$. This gives $e \leq 4$ and proves Theorem \ref{T8}.

\end{proof}

\begin{rem}
\begin{enumerate}

\item We now give a very simple, direct  and different proof of Theorem \ref{T8} for the case that in addition $d=\sqrt{abc}$, especially for the case $a=1$, $b=c=d$.\\
Let 
\begin{align*}
ax^2 + by^2 + cz^2 = dxyz + e
\end{align*}
with $a,b,c,d,e \in \mathbb N$ such that $a|d$, $b|d$, $c|d$ and $gcd(a,b,c)=1$; and let in addition $d=\sqrt{abc}$. Write $x':=\sqrt{a} \cdot x$, $y':=\sqrt{b} \cdot y$ and $z':=\sqrt{c} \cdot z$. Then 
\begin{align*}
x'^2 + y'^2 + z'^2 = x'y'z' + e.
\end{align*}
Let $F \ne \emptyset$ and assume that $5 \leq e$.\\
 Then we may construct $A,B \in SL(2, \mathbb R)$ with $tr A = x'$, $tr B = y'$, $tr AB= z'$ and $tr ABA^{-1}B^{-1}=x'^2+y'^2+z'^2 -x'y'z'-2 \geq 3$. The algorithmic method, developed in \cite{K-IR}, now automatically gives a contradiction. Hence, $e\leq 4$.

\item Let 
\begin{align*}
ax^2+by^2+cz^2=dxyz + e
\end{align*} 
with $a, b, c, d,e \in \mathbb N$ such that $a|d$, $b|d$, $c|d$, $gcd(a,b,c)=1$ and $1 \leq e \leq 3$.
The following list, see Table \ref{diphantinsolution}, gives all diophantine equations for which a fundamental solution exist, and in each case we give the fundamental solutions. To start the list, without loss of generality, we may assume that $1 \leq a \leq b \leq c$. We remark that the list for $e=1$ is due to Baragar and Umeda \cite{BU}. 

\newpage

\begin{table}[ht]
	\begin{center}
	\caption{Diophantine equations with fundamental solutions}\label{diphantinsolution}
	\begin{tabular}{c | c }
	Equation & Fundamental solutions \\
	\hline
	& \\
	$\phantom{1}x^2+\phantom{1}5y^2+\phantom{1}5z^2=\phantom{1}5xyz+1$ & $(4,1,2)$ and $(4,2,1)$ \\
	& \\
	$\phantom{1}x^2+\phantom{1}3y^2+\phantom{1}6z^2=\phantom{1}6xyz+1$ & $(2,1,1)$ \\
	& \\
	$3x^2+\phantom{1}4y^2+\phantom{1}6z^2=12xyz+1$ & $(1,1,1)$ \\
	& \\
	$2x^2+\phantom{1}7y^2+14z^2=14xyz+1$ & $(2,1,1)$ \\
	& \\
	$2x^2+\phantom{1}2y^2+\phantom{1}3z^2=\phantom{1}6xyz+1$ & $(1,1,1)$ \\
	& \\
	$6x^2+10y^2+15z^2=30xyz+1$ & $(1,1,1)$ \\
	& \\
	$\phantom{1}x^2+\phantom{1}2y^2+\phantom{1}2z^2=\phantom{1}2xyz+1$ & $(3,2,2)$ \\
	& \\
	$\phantom{1}x^2+\phantom{1}8y^2+\phantom{1}8z^2=\phantom{1}8xyz+1$ & $(3,1,1)$ \\
	& \\
	\hline
	& \\
	$3x^2+5y^2+15z^2=15xyz+2$ & $(2,1,1)$ \\
	& \\
	$2x^2+3y^2+\phantom{1}6z^2=\phantom{1}6xyz+2$ & $(2,2,1)$ \\
	& \\
	$\phantom{1}x^2+7y^2+\phantom{1}7z^2=\phantom{1}7xyz+2$ & $(3,1,1)$ \\
	& \\
	\hline
	& \\
	$2x^2+5y^2+10z^2=10xyz+3$ & $(2,1,1)$ \\
	& \\
	$\phantom{1}x^2+6y^2+\phantom{1}6z^2=\phantom{1}6xyz+3$ & $(3,1,1)$ \\
	\end{tabular}
	\end{center}
\end{table}
If $3x^2+4y^2+6z^2=12xyz+1$ then the coefficient $b=4$ is not squarefree, and if we replace $y$ by $y'=2y$ then we get the equation $3x^2+y'^2+6z^2=6xy'z+1$, and in this sense these two equations are equivalent.\\ 
If $x^2+8y^2+8z^2=8xyz+1$ then the coefficients $b=8$ and $c=8$ are not squarefree, and if we replace $y$ by $y'=2y$ and $z$ by $z'=2z$ then we get the equation $x^2+2y'^2+2z'^2=2xy'z'+1$, and in this sense these two equations are equivalent.
\item Now let
\begin{align*}
ax^2+by^2+cz^2 = dxyz + 4
\end{align*}
with $a,b,c,d \in \mathbb N$ such that $a|d$, $b|d$, $c|d$ and $gcd(a,b,c)=1$.\\
Let $a \leq b \leq c$. The diophantine equation has a fundamental solution if and only if $a=1$ and $b=c=d$, that is, 
\begin{align*}
x^2+dy^2+dz^2 = dxyz + 4,
\end{align*}
and this equation has infinitely many fundamental solutions $(2,n,n)$ with $\sqrt{d}\cdot n \geq 2$. For $1 \leq d \leq 3$ then $(2,1,1)$ is not a fundamental solution.
\item  Let
\begin{align*}
ax^2+by^2+cz^2 = dxyz + e
\end{align*}
with $a,b,c,d,e \in \mathbb N$ such that $a|d$, $b|d$, $c|d$ and $gcd(a,b,c)=1$. Let in addition $d=\sqrt{abc}$ and $1 \leq e \leq 2$.\\
 Assume further that $F \ne \emptyset$ and $(x,y,z) \in F$.
Then we may construct a group $G= \ <A,B> \ \subset PSL(2, \mathbb R)$ with $tr A=\sqrt{a} \cdot x$, $tr B=\sqrt{b} \cdot y$ and $tr AB =\sqrt{c} \cdot z$. Then $G$ is a discret subgroup of $PSL(2, \mathbb R)$ with a presentation 
\begin{align*}
\langle  A,B | (ABA^{-1}B^{-1})^n = 1\rangle
\end{align*}
where $n=3$ if $e=1$ and $n=2$ if $e=2$, that is, $G$ has signature $(1;n)$ where $n=3$ if $e=1$ and $n=2$ if $e=2$ (see \cite{FR}). In fact, in both cases $G$ is an arithmetic  Fuchsian group with invariant trace field $\mathbb Q$ (see \cite{T}).
\item Let
\begin{align*}
ax^2+by^2+cz^2 = dxyz + 3
\end{align*}
with $a,b,c,d \in \mathbb N$ such that $a|d$, $b|d$, $c|d$ and $gcd(a,b,c)=1$.\\ Assume further that $F \ne \emptyset$ and $(x,y,z) \in F$. Recall from the above list (Table~\ref{diphantinsolution})  that  here $d=\sqrt{abc}$. We may construct a group $G= \ <A,B>$ with $G \subset PSL(2, \mathbb R)$, $tr A=\sqrt{a} \cdot x$, $tr B=\sqrt{b} \cdot y$ and $tr AB =\sqrt{c} \cdot z$. Then $G$ is a discrete subgroup of $PSL(2, \mathbb R)$ with a presentation
\begin{align*}
\langle s_1, s_2, s_3 | s_1^2=s_2^2=s_3^2=(s_1s_2s_3)^3=1 \rangle,
\end{align*}
where $A=s_1s_2$, $B=s_3s_1$, that is, G has signature $(0;2,2,2,3)$ (see~\cite{FR}). In fact, $G$ is an arithmetic Fuchsian group with invariant trace field $\mathbb Q$ (see \cite{MR} and \cite{ANR}).
\item Altogether, let 
\begin{align*}
ax^2+by^2+cz^2 = dxyz + e
\end{align*}
with $a,b,c,d,e \in \mathbb N$ such that $a \leq b \leq c$, and $a,b,c$ squarefree,  $a|d$, $b|d$, $c|d$, $gcd(a,b,c)=1$, $d= \sqrt{abc}$ and $1 \leq e \leq 3$. From the list (Table~\ref{diphantinsolution}) we see that there are exactly nine such diophantine equations. These nine equations are in 1-1 correspondence with the nine $PGL(2,\mathbb R)$-conjugacy classes of the two generator arithmetic Fuchsian groups of a signature $(1;2)$, $(1;3)$ or $(0;2,2,2,3)$ and with invariant trace field $\mathbb Q$ (see \cite{MR}, \cite{ANR} and \cite{T}).
\item In fact, we could in general assume that $a,b,c$ are squarefree, without loss of generality.
This can be seen as follows. Let, for instance, $a=p^2a'$ with $p>1$. 
Then we define $x':=px$, $d':=\frac{d}{p}$ and get $$a'x'^2+by^2+cz^2=d'x'yz.$$
If $p$ $\not|$ $b$ and $p$ $\not|$ $c$ then $p$ $|$ $d'$ and $c$ $|$ $d'$.\\
Let, for instance, $p$ $|$ $b$. Then $p$ $\not|$ $c$ because $gcd(a,b,c)=1$. If $p^2$ $\not|$ $b$ then we still have $b$ $|$ $d'$ because $p^2$ $|$ $d$; certainly $c$ $|$ $d'$ and $gcd(a',b,c)=1$. If $p^2$ $|$ $b$, that is, $b=p^2b'$ then, in addition we define $y':=py$ and $d'':=\frac{d'}{p}=\frac{d}{p^2}$ and get the equation $$a'x'^2+b'y'^2+cz^2=d''x'y'z$$ with $a'$ $|$ $d''$, $b'$ $|$ $d''$, $c$ $|$ $d''$ and $gcd(a',b',c)=1$.
\end{enumerate}
\end{rem}




\section*{Author information}
Benjamin Fine, Department of Mathematics, Fairfield University\\
Fairfield, Connecticut 06430, USA.\\
E-mail:\texttt{fine@fairfield.edu}\\

Gabriele Kern-Isberner, Fakultät für Informatik, TU Dortmund,\\
 Otto-Hahn-Strasse 12, 44227 Dortmund, Germany.\\
E-mail: \texttt{gabriele.kern-isberner@cs.uni-dortmund.de}\\

Anja I. S. Moldenhauer, Fachbereich Mathematik, Universität Hamburg,\\
Bundesstrasse 55, 20146 Hamburg, Germany.\\
E-mail: \texttt{anja.moldenhauer@uni-hamburg.de}\\

Gerhard Rosenberger, Fachbereich Mathematik, Universität Hamburg,\\
Bundesstrasse 55, 20146 Hamburg, Germany.\\
E-mail: \texttt{gerhard.rosenberger@math.uni-hamburg.de}\\

\end{document}